\tikzset{Mylong/.style={text width=3.1cm, align=center}, myarr/.style={->, double equal sign distance, -implies}}
\newtheorem*{theorem*}{Theorem} 
\newtheorem{theorem}{Theorem}[section]
\newtheorem{corollary}[theorem]{Corollary}
\newtheorem*{corollary*}{Corollary}
\newtheorem{lemma}[theorem]{Lemma}
\newtheorem*{lemma*}{Lemma}
\newtheorem{proposition}[theorem]{Proposition}
\theoremstyle{definition}
\newtheorem{definition}[theorem]{Definition}
\newtheorem{example}[theorem]{Example}
\newtheorem*{example*}{Example}
\theoremstyle{remark}
\newtheorem*{remark}{Remark}
\title[On periodicity and hypercyclic weighted translation operators] {On periodicity and hypercyclic weighted translation operators}
\author[Kui-Yo Chen]{Kui-Yo Chen}
\subjclass[2010]{47A16, 47B38, 43A15, 22C05, 43A77}
\keywords{Periodic. Hypercyclicity. Weighted translation operator. Locally compact group. $L^p$-space.}
\address{Department of Mathematics, National Taiwan University, Taiwan}
\email{kuiyochen1230@gmail.com, r04221001@ntu.edu.tw}
\date{\today}
\begin{document}

\begin{abstract}
We use the generization of Weyl's equidistribution theorem to characterize several necessary conditions of hypercyclic weighted translation operators with periodic element.
\end{abstract}

\maketitle
\addcontentsline{toc}{section}{Title}
\baselineskip17pt

\section{Introduction} 
\label{sec:introduction}

The main purpose of this paper is to discuss some necessary conditions of the existence of hypercyclic weighted translation operators with a periodic element. And it lead us to Section \ref{sec:a_necessary_condition_of_hypercyclic_weighted_translation_operators} and Section \ref{sec:some_properties_of_hypercyclic_weighted_translations_on_p_adic}. Section \ref{sec:homogeneous_equidistribution_on_compact_groups} is prepared for the Section \ref{sec:a_necessary_condition_of_hypercyclic_weighted_translation_operators} which defines a property  {\it homogeneous equidistribution} (Definition \ref{deftransunifequi}) that is the uniform version of the well-known Weyl's equidistribution on compact group.

Let $G$ be a second countable locally compact Hausdorff group equipped with the right Haar measure and $a$ be an element of $G$. The weighted translation operator $T_{a,w}$ is a bounded linear self-map on the Banach space $L^p(G)$ for some $p \in [1, \infty)$ defined by 
\[T_{a,w}(f)(x):=w(x)f(xa^{-1}),\]
where the weight $w$ is a bounded continuous function from $G$ to $(0,\infty )$. And we denote $T_{a,1}$ by $T_a$. Notice that $T_a w$ is a function translated by $a$ while $T_{a,w}$ is a weighted translation operator.

In order to analyze $T_{a,w}$, we are used to classifying some different topological properties of the elements in $G$. We call an element $a$ of $G$ {\it torsion} if it has finite order. An element $a$ is {\it periodic} if the closed subgroup $G(a)$ generated by $a$ (i.e. $G(a)=\overline{<a>}$) is compact in $G$. An element $a$ is {\it aperiodic} if it is not periodic.

An operator $T$ on a Banach space $X$ is called {\it hypercyclic} if there exists a vector $x\in X$ such that its orbit is dense in X (i.e. $orb(T,x):=\left\{T^nx|n\in \mathbb{N}\right\}$ is dense in $X$).

\begin{lemma*} $ $
\cite[Lemma 1.1.]{Hypercyclic_on_groups}
Let $G$ be a locally compact group and let $a\in G$ be a torsion element.
Then any weighted translation $T_{a,w} : L^p(G) \to L^p(G)$ is not hypercyclic, for
$1 \le p <\infty$.
\end{lemma*}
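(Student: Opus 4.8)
The plan is to pass to a suitable power of $T_{a,w}$. Since $a$ is torsion, fix an integer $N\ge 1$ with $a^N=e$. A direct induction on the defining formula gives
\[
T_{a,w}^n f(x)=\Big(\prod_{j=0}^{n-1} w(xa^{-j})\Big)\, f(xa^{-n}),
\]
so, writing $n=qN+r$ with $0\le r<N$ and using $a^N=e$, one obtains $T_{a,w}^n f=W^{q}\cdot\big(T_{a,w}^r f\big)$, where $W(x):=\prod_{j=0}^{N-1}w(xa^{-j})$ is a continuous, strictly positive, bounded function on $G$ (with $W\le\|w\|_\infty^N$). In particular $T_{a,w}^N=M_W$, multiplication by $W$, and the orbit of any $f$ is contained in the \emph{finite} union $\bigcup_{r=0}^{N-1}\{\,W^{q}\,T_{a,w}^r f : q\ge 0\,\}$. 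I would then suppose, for contradiction, that some $f$ is hypercyclic for $T_{a,w}$, and exploit this description of the orbit, splitting into two cases according to the Haar measure of the (closed, hence measurable) set $A:=\{x\in G: W(x)\le 1\}$.

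If $\mu(A)>0$, I would use the norm-one projection $P\colon L^p(G)\to L^p(A)$, $Pf=f\cdot\mathbf 1_A$, onto the nonzero, closed, $M_W$-invariant subspace $L^p(A)$. As a continuous surjection, $P$ maps the dense set $\mathrm{orb}(T_{a,w},f)$ to a dense subset of $L^p(A)$; but $P\big(W^{q}T_{a,w}^r f\big)=W^{q}\cdot P(T_{a,w}^r f)$ and on $A$ multiplication by $W^{q}$ is a contraction, so this image lies in $\bigcup_{r=0}^{N-1}\overline B\big(0,\|P(T_{a,w}^r f)\|_p\big)$, a bounded set — which cannot be dense in the nonzero normed space $L^p(A)$. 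If instead $\mu(A)=0$, then $W>1$ a.e., hence $\|W^{q}h\|_p\ge\|h\|_p$ for every $h\in L^p(G)$ and every $q\ge 0$; consequently every point of $\mathrm{orb}(T_{a,w},f)$ has norm at least $\delta:=\min_{0\le r<N}\|T_{a,w}^r f\|_p$. Since $w$ is strictly positive and $x\mapsto xa^{-1}$ preserves Haar measure, $T_{a,w}$ is injective, so each $T_{a,w}^r f\neq 0$ (a hypercyclic vector is nonzero, and $L^p(G)\neq\{0\}$), whence $\delta>0$; thus $0$ is not in the closure of the orbit, contradicting density. In either case $T_{a,w}$ is not hypercyclic.

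The substantive point — the only place where one must be a little careful — is the first case: when $W\le 1$ on a positive-measure set, the restriction $M_W|_{L^p(A)}$ need only be a (possibly non-strict) contraction, so one cannot say the orbit shrinks to $0$; the right move is instead the projection argument above, using that a finite union of bounded sets is bounded and that the image of a dense set under a continuous surjection is dense. Everything else is elementary — in particular no appeal to Ansari's theorem or the Bourdon–Feldman theorem is needed — and the argument is insensitive to $\dim L^p(G)$, so the case of finite $G$ requires no separate treatment.
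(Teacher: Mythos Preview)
Your argument is correct. The key identity $T_{a,w}^{qN+r}f=W^{q}\cdot T_{a,w}^{r}f$ is right, and your two-case split on $\mu(\{W\le 1\})$ cleanly disposes of the multiplication-operator part without any appeal to Ansari's theorem: in Case~1 the projection onto $L^{p}(A)$ is a bounded linear surjection (indeed it has the obvious bounded right inverse, extension by zero), so it carries dense sets to dense sets, while the image of the orbit is bounded; in Case~2 the orbit stays outside the ball of radius $\delta>0$. Both cases use only that a hypercyclic vector is nonzero and that $L^{p}(G)\neq\{0\}$, which you note.

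As for comparison: the paper does \emph{not} supply its own proof of this lemma --- it is quoted verbatim from Chen--Chu \cite{Hypercyclic_on_groups} and used as background. The original argument there proceeds via Ansari's theorem: if $T_{a,w}$ were hypercyclic then so would be $T_{a,w}^{N}=M_{W}$, and one then checks that a multiplication operator on $L^{p}(G)$ is never hypercyclic. Your route is genuinely different in that it bypasses Ansari entirely by handling the whole orbit $\{W^{q}\,T_{a,w}^{r}f\}$ at once rather than reducing first to the sub-orbit $\{W^{q}f\}$; this makes your proof self-contained at the cost of carrying the extra index $r$ through the estimates. The Chen--Chu approach is shorter if one is willing to quote Ansari; yours is the more elementary and, as you observe, needs no separate treatment for finite $G$.
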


\begin{theorem*} $ $
\cite[Theorem 3.2.]{On_aperiodicity_and_hypercyclic_weighted}
Let $G$ be a locally compact group and $a$ is an aperiodic element in $G$. Then there exists a weighted translation operator $T_{a,w}$ which is mixing, chaotic and frequently hypercyclic on $L^p(G)$ for all $p\in [1,\infty)$, simultaneously.
\end{theorem*}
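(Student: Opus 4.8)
The plan is to produce a single bounded continuous weight $w$ working for every $p\in[1,\infty)$ at once, by reducing all three dynamical properties to one quantitative estimate on the products of $w$ along the $a$-orbits and then meeting it by an explicit construction. With $\varphi_n(x):=\prod_{k=0}^{n-1}w(xa^{-k})$ one has $T_{a,w}^n f(x)=\varphi_n(x)\,f(xa^{-n})$, and right-invariance of the Haar measure gives, for $f\in C_c(G)$ and $K:=\operatorname{supp}f$,
\[\|T_{a,w}^n f\|_p^p=\int_K\Big(\prod_{j=1}^n w(ya^j)\Big)^{p}|f(y)|^p\,dy .\]
Defining $Sf(x):=f(xa)/w(xa)$, one checks that $S$ maps $C_c(G)$ into itself, that $T_{a,w}S=\mathrm{Id}$ on $C_c(G)$, that $S^n f(x)=f(xa^n)/\prod_{j=1}^n w(xa^j)$, and that $\|S^n f\|_p^p=\int_K\big(\prod_{k=0}^{n-1}w(ya^{-k})\big)^{-p}|f(y)|^p\,dy$. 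So it suffices to build a bounded continuous $w\colon G\to(0,\infty)$ with
\[\sup_{y\in K}\prod_{j=1}^n w(ya^j)\le e^{C_K-n}\qquad\text{and}\qquad\inf_{y\in K}\prod_{k=0}^{n-1}w(ya^{-k})\ge e^{n-C'_K}\]
for every compact $K$, with $C_K,C'_K$ depending only on $K$. Given this, $\sum_n\|T_{a,w}^n f\|_p<\infty$ and $\sum_n\|S^n f\|_p<\infty$ for all $f$ in the dense subspace $C_c(G)$: the Frequent Hypercyclicity Criterion then makes $T_{a,w}$ frequently hypercyclic; the vectors $\sum_{j\ge 0}T_{a,w}^{jN}f+\sum_{j\ge 1}S^{jN}f$ are fixed by $T_{a,w}^{N}$ and, for $N$ large, arbitrarily close to $f$, giving density of periodic points and hence chaos; and the two bounds read along the full sequence $n\to\infty$ give Kitai's criterion, hence topological mixing. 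All of this is uniform in $p$ because $w$ does not depend on $p$.

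The construction of $w$ is the heart of the matter, and it is where aperiodicity enters. Since $a$ is aperiodic, $\langle a\rangle$ leaves every compact subset of $G$, so a compact neighbourhood of $e$ meets it in only a finite set; hence $\langle a\rangle$ is an infinite discrete closed subgroup of $G$, topologically isomorphic to $\mathbb{Z}$. Therefore right translation by $\langle a\rangle$ makes $q\colon G\to G/\langle a\rangle$ a principal $\mathbb{Z}$-bundle over the second-countable, hence metrizable and paracompact, base $G/\langle a\rangle$; choosing local trivialisations and a subordinate partition of unity, the local integer fibre-coordinates glue to a continuous cocycle $\beta\colon G\to\mathbb{R}$ with $\beta(xa)=\beta(x)+1$ for all $x$. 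Fixing a continuous non-increasing $g\colon\mathbb{R}\to[-1,1]$ with $g\equiv-1$ on $[1,\infty)$ and $g\equiv1$ on $(-\infty,-1]$, I would set $w:=e^{g\circ\beta}$; this is continuous, positive, and bounded in $[e^{-1},e]$, so $T_{a,w}$ is bounded on every $L^p(G)$.

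The verification of the two displayed bounds is then routine: $\prod_{j=1}^n w(ya^j)=\exp\sum_{j=1}^n g(\beta(y)+j)$, and since $\beta$ is bounded on the compact set $K$ all but finitely many summands equal $-1$, which yields the first estimate; the second follows symmetrically because all but finitely many summands of $\sum_{k=0}^{n-1}g(\beta(y)-k)$ equal $+1$. I expect the real obstacle to be the construction of the cocycle $\beta$, a globally consistent ``orbit-direction coordinate'' on $G$: this is exactly where the discreteness of $\langle a\rangle$ distilled from aperiodicity is indispensable, and it is the ingredient that disappears in the periodic case, in line with the obstructions to hypercyclicity studied in the rest of the paper.
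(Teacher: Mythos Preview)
This theorem is not proved in the present paper at all: it is merely quoted from \cite{On_aperiodicity_and_hypercyclic_weighted} in the introduction as motivation, so there is no in-paper argument to compare your attempt against.

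On its own merits your sketch is essentially correct and follows the natural line of attack: build a single weight for which the forward and backward orbit products $\prod_{j=1}^n w(ya^j)$ and $\prod_{k=0}^{n-1} w(ya^{-k})^{-1}$ decay exponentially, uniformly on compacta, and then read off mixing, chaos and frequent hypercyclicity from the standard criteria applied on the dense subspace $C_c(G)$; since the weight does not depend on $p$, one gets all $L^p$ at once. The cocycle construction $w=e^{g\circ\beta}$ with $\beta(xa)=\beta(x)+1$ is a clean way to realise this, and the bundle-theoretic justification (trivialising the induced principal $\mathbb{R}$-bundle over the paracompact base $G/\langle a\rangle$) is sound.

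The only step I would ask you to tighten is the passage from ``$a$ aperiodic'' to ``$\langle a\rangle$ is closed, discrete and isomorphic to $\mathbb{Z}$''. Your sentence ``$\langle a\rangle$ leaves every compact subset of $G$'' is asserted rather than proved, and it is not an immediate reformulation of the definition. What is really being used here is Weil's dichotomy for locally compact monothetic groups: $\overline{\langle a\rangle}$ is either compact or topologically isomorphic to $\mathbb{Z}$, so aperiodicity forces the latter. Once you cite or prove that, the rest of your argument goes through as written.
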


\cite[Lemma 1.1]{Hypercyclic_on_groups} gives a non-existence of hypercyclic weighted translation when the element $a$ is a torsion.
On the other hand, \cite[Theorem 3.2.]{On_aperiodicity_and_hypercyclic_weighted} gives the general exsitence of hypercyclic weighted translation operators when $a$ is an aperiodic element.
So the question of the existence of hypercyclic weighted translation focus on the case $a$ is non-torsion periodic.
Notice that this question is still an open problem.
To solve this problem, we discussed some necessary conditions of the existence of hypercyclic weighted translation operators with a periodic element. Thus, we got the following two main results.
\begin{theorem*}(Theorem \ref{conditionofwsoncompactgp})
Let $G$ be a compact Hausdorff group, and let $a$ be an element in $G$ such that $G=\overline{<a>}$. If $T_{a,w}$ is hypercyclic on $L^p(G)$ for some $p\in [1,\infty )$, then 
\[\int_G \ln w = 0.\]
\end{theorem*}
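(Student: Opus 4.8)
The plan is to estimate the growth of the iterates $T_{a,w}^n$ precisely, using the homogeneous equidistribution of the sequence $(a^k)_{k\ge 1}$ in $G$, and then to rule out $\int_G \ln w \ne 0$ by showing that a nonzero value would force every orbit to be either uniformly bounded away from $0$ or convergent to $0$, neither of which is compatible with the existence of a dense orbit.

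First I would record a few reductions and one formula. Since $w$ is continuous and strictly positive on the compact group $G$, it is bounded above and bounded away from $0$; in particular $\ln w$ is a bounded continuous real-valued function on $G$. Normalize the Haar measure so that $\mu(G)=1$; since scaling the measure scales the integral, this does not affect the vanishing of $\int_G \ln w$. If $G$ is finite then $a$ is torsion and $T_{a,w}$ is not hypercyclic by \cite[Lemma 1.1]{Hypercyclic_on_groups}, so we may assume $G$ is infinite, hence $L^p(G)$ is infinite-dimensional. Iterating the definition gives $T_{a,w}^n f(x) = \big(\prod_{j=0}^{n-1} w(xa^{-j})\big) f(xa^{-n})$, and substituting $x \mapsto xa^n$ (Haar measure is translation invariant) yields
\[\|T_{a,w}^n f\|_p^p = \int_G \Big(\prod_{k=1}^{n} w(xa^{k})\Big)^p |f(x)|^p \, dx = \int_G \exp\!\Big(pn\cdot \tfrac1n\sum_{k=1}^{n}\ln w(xa^{k})\Big)\,|f(x)|^p\,dx.\]

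The analytic heart of the argument is to control $\tfrac1n\sum_{k=1}^n \ln w(xa^k)$ uniformly in $x$. Because $G = \overline{\langle a\rangle}$ is compact and monothetic, Weyl's equidistribution applies to $(a^k)_{k\ge 1}$, and its uniform version — the homogeneous equidistribution developed in Section \ref{sec:homogeneous_equidistribution_on_compact_groups} (Definition \ref{deftransunifequi}) — applied to the continuous function $\ln w$ gives
\[\sup_{x\in G}\Big|\tfrac1n\sum_{k=1}^{n}\ln w(xa^{k}) - \int_G \ln w\,d\mu\Big| \longrightarrow 0 \qquad (n\to\infty).\]
Writing $c := \int_G \ln w\,d\mu$, it follows that for every $\delta>0$ there is $N$ with $e^{pn(c-\delta)}\|f\|_p^p \le \|T_{a,w}^n f\|_p^p \le e^{pn(c+\delta)}\|f\|_p^p$ for all $n\ge N$ and all $f\in L^p(G)$.

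Finally I would split according to the sign of $c$. If $c>0$, take $\delta = c/2$: then $\|T_{a,w}^n f\|_p \to \infty$ for every $f\ne 0$, while $T_{a,w}$ is injective (as $w>0$), so $\inf_{n\ge 0}\|T_{a,w}^n f\|_p>0$; hence $0$ lies in no orbit closure, no orbit is dense, contradicting hypercyclicity. If $c<0$, take $\delta = |c|/2$: then $T_{a,w}^n f \to 0$ for every $f$, so every orbit together with $0$ is a compact set, which cannot be dense in the infinite-dimensional space $L^p(G)$ — again a contradiction. Therefore $c = 0$, i.e. $\int_G \ln w = 0$. I expect the only genuine obstacle to be the uniform (homogeneous) equidistribution input; once that is available, the growth dichotomy and its incompatibility with a dense orbit are routine.
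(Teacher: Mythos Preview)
Your argument is correct, and it is more direct than the paper's. The essential analytic input you use is Lemma~\ref{uniformlyconverge}: since $\ln w$ is continuous on the compact group $G$, the ergodic averages $\tfrac1n\sum_{k=1}^n \ln w(xa^{k})$ converge to $\int_G \ln w$ \emph{uniformly} in $x$. (Your citation of Definition~\ref{deftransunifequi} is slightly off; that definition concerns densities of sets, whereas the statement you actually invoke is the continuous--function version, Lemma~\ref{uniformlyconverge}.) From there your two--sided bound $e^{pn(c-\delta)}\|f\|_p^p\le\|T_{a,w}^nf\|_p^p\le e^{pn(c+\delta)}\|f\|_p^p$ and the orbit dichotomy are clean and complete; the injectivity of $T_{a,w}$ and the infinite--dimensionality of $L^p(G)$ (after discarding the finite case) handle the endpoints.

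The paper reaches the same conclusion by a longer route. Instead of applying Lemma~\ref{uniformlyconverge} to the continuous function $\ln w$, it works with $\mathcal{A}$--step functions $\phi$ and uses the set--theoretic homogeneous equidistribution (Lemma~\ref{lemmatransunifequiofelement}) to show that $(\phi_N)^{1/N}$ is uniformly close to $\prod_i\alpha_i^{m_i}=\exp\!\big(\int_G\ln\phi\big)$. It then combines Ansari's theorem (powers of hypercyclic operators are hypercyclic) with the elementary fact that a weighted translation whose weight is everywhere $>1$ (resp.\ $<1$) cannot be hypercyclic, to deduce that $w\ge\phi\Rightarrow\int_G\ln\phi\le 0$ and $w\le\phi\Rightarrow\int_G\ln\phi\ge 0$, and finishes by approximating $\ln w$ from above and below by such step functions. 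Your approach bypasses the step--function approximation and the appeal to Ansari entirely. What the paper's detour buys is the remark following Theorem~\ref{conditionofwsoncompactgp}: the only feature of hypercyclicity actually used is that $w_n\not\ge 1$ and $w_n\not\le 1$ for all $n$, so the conclusion holds under that weaker hypothesis. Your norm--growth argument uses density of an orbit more directly and does not immediately isolate this weaker sufficient condition.
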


\begin{theorem*}(Theorem \ref{p_adic_main})
If $T_{a,w}$ is hypercyclic on $L^{p'}(G)$ where $G=\mathbb{Q}_p \text{ or } \mathbb{Z}_p$ for some prime $p$ and $p'\in [1,\infty)$, then
$$\int_G \ln w = 0$$
and $w$ must not be a locally constant function.
\end{theorem*}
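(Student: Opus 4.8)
The plan is to reduce the problem on $G\in\{\mathbb{Q}_p,\mathbb{Z}_p\}$ to the compact case already handled in Theorem \ref{conditionofwsoncompactgp}, by slicing $G$ into cosets of the compact open subgroup $H:=\overline{\langle a\rangle}$, and then to rule out local constancy by a finite-dimensional quotient argument. First I would record the easy structural facts: $a\ne 0$ (a torsion element supports no hypercyclic translation, by the Lemma of \cite{Hypercyclic_on_groups}, and $\mathbb{Q}_p,\mathbb{Z}_p$ are torsion-free), and for any $0\ne a\in\mathbb{Q}_p$ one has $\overline{\langle a\rangle}=a\mathbb{Z}_p=p^{k}\mathbb{Z}_p$ for a suitable $k\in\mathbb{Z}$ (with $k\ge 0$ when $G=\mathbb{Z}_p$); thus $H$ is a compact open subgroup of $G$, isomorphic to $\mathbb{Z}_p$ and topologically generated by $a$.

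The first stage is the coset decomposition. Since $a\in H$, the right translation $x\mapsto xa^{-1}$ maps each coset $gH$ onto itself, and since $T_{a,w}$ additionally only multiplies by the scalar-valued weight $w$, the space $L^{p'}(G)$ splits as the $\ell^{p'}$-direct sum of the $T_{a,w}$-invariant summands $L^{p'}(gH)$ with $gH$ ranging over the countable quotient $G/H$. The coordinate projection onto any summand is bounded, surjective and intertwines $T_{a,w}$ with its restriction, so hypercyclicity of $T_{a,w}$ forces each $T_{a,w}\big|_{L^{p'}(gH)}$ to be hypercyclic; transporting $gH$ onto $H$ by $h\mapsto gh$ (the accompanying change of Haar measure only rescales the norm by a positive constant) conjugates this restriction to the weighted translation $T_{a,w_g}$ on $L^{p'}(H)$, where $w_g(h)=w(gh)$ is again a bounded positive continuous weight. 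As $H$ is compact and $a$ generates it topologically, Theorem \ref{conditionofwsoncompactgp} applies and gives $\int_H\ln w(gh)\,d\mu_H(h)=0$ for every $g\in G$. Summing this over the finitely many $H$-cosets inside a compact open subgroup $p^{-n}\mathbb{Z}_p$ yields $\int_{p^{-n}\mathbb{Z}_p}\ln w=0$ for all $n$, whence $\int_G\ln w=0$ — a single integral when $G=\mathbb{Z}_p$, and the limit along the exhaustion $G=\bigcup_n p^{-n}\mathbb{Z}_p$ when $G=\mathbb{Q}_p$.

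The second stage rules out local constancy. Suppose, for contradiction, that $w$ is locally constant. Restricting to the compact group $H$ and using that $\{p^N\mathbb{Z}_p\}_N$ is a neighbourhood basis at $0$, a routine compactness argument produces an $N$ with $p^N\mathbb{Z}_p\subseteq H$ such that $w$ takes a constant value $c_C>0$ on each of the $d:=[H:p^N\mathbb{Z}_p]<\infty$ cosets $C$ of $p^N\mathbb{Z}_p$ inside $H$. Consider the closed subspace
\[W:=\Big\{f\in L^{p'}(H):\int_C f=0\ \text{for every coset $C$ of $p^N\mathbb{Z}_p$ in $H$}\Big\}.\]
Because $x\mapsto xa^{-1}$ permutes these $d$ cosets and $w$ is constant on each of them, one checks $\int_C T_{a,w}f=c_C\int_{Ca^{-1}}f$, so $W$ is $T_{a,w}$-invariant, and the bounded surjection $f\mapsto\big(\int_C f\big)_C$ identifies $L^{p'}(H)/W$ with a $d$-dimensional space on which $T_{a,w}$ induces the weighted cyclic shift $(v_C)_C\mapsto(c_C v_{Ca^{-1}})_C$. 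By the first stage $T_{a,w}\big|_{L^{p'}(H)}$ is hypercyclic, hence so is this induced operator on a finite-dimensional quotient — impossible, since no finite-dimensional normed space carries a hypercyclic operator. Therefore $w$ is not locally constant.

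The main obstacle, I expect, is not conceptual but organisational: the measure-theoretic bookkeeping of the first stage (the $\ell^{p'}$-splitting over $G/H$ and the Haar-measure normalisation on each coset) has to be done carefully, though it is routine. The one real idea is in the second stage — passing to a finite-dimensional \emph{quotient}, so that hypercyclicity is inherited from $T_{a,w}$ and then obstructed by finite-dimensionality — and all the analytic substance of the theorem is supplied by the previously established Theorem \ref{conditionofwsoncompactgp}.
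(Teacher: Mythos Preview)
Your argument is correct, and the first stage (the coset/\,$H$-slicing to get $\int_{b+H}\ln w=0$ for every coset $b+H$, and hence $\int_G\ln w=0$) is exactly what the paper does, only you phrase it as an $\ell^{p'}$-direct-sum decomposition while the paper draws the corresponding commutative diagrams with $T_{-b}$ and a restriction map.

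For the second stage, your route differs genuinely from the paper's. The paper first proves an auxiliary result (Theorem~\ref{conditionofws}): for every $n\ne 0$ and every $x'$, on the ball $B_{\le |na|_p}(x')$ the function $w_n$ must take values both $>1$ and $<1$. It then observes that a locally constant $w$ on $\mathbb{Z}_p$ is $k$-u.l.c.\ for some $k$, hence so are all $w_n$; taking $n=p^k$ forces $w_n$ to be constant on $B_{\le |na|_p}(0)$, contradicting Theorem~\ref{conditionofws}. The $\mathbb{Q}_p$ case is then reduced to $\mathbb{Z}_p$ by another diagram. Your argument bypasses Theorem~\ref{conditionofws} entirely: you pass to the finite quotient $L^{p'}(H)\to L^{p'}(H)/W\cong\mathbb{C}^d$ via $f\mapsto(\int_C f)_C$, note that the induced operator is a weighted cyclic shift on $\mathbb{C}^d$, and invoke the elementary fact that finite-dimensional spaces carry no hypercyclic operators. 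This is shorter and more self-contained; on the other hand, the paper's Theorem~\ref{conditionofws} yields strictly more information about the pointwise behaviour of the $w_n$ on small balls, which is of independent interest even when $w$ is not locally constant.
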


And also, there is an interesting and unanticipated corollary in Section \ref{sec:homogeneous_equidistribution_on_compact_groups}.

\begin{corollary*}(Corollary \ref{noncyclic})
Let $G$ be a finite group. Then the following are equivalent.
\begin{enumerate}[{(1)}]
\item
$G$ is non-cyclic.
\item
For any $a \in G$, there exists $\pi \in \hat{G}\setminus \{[1]\}$ such that $\pi(a)$ has an eigenvalue $1$.
\end{enumerate}
\end{corollary*}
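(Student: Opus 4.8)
The plan is to prove the two implications separately using only elementary complex representation theory of the finite group $G$; throughout, $\hat G$ is the set of irreducible unitary representations up to equivalence, $[1]$ is the trivial one, and "$\pi(a)$ has an eigenvalue $1$" means $\ke(\pi(a)-I)\neq 0$. I expect both directions to be short, with the only real subtlety a correct bookkeeping of multiplicities.

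For $(2)\Rightarrow(1)$ I would argue contrapositively. Assume $G=\langle a\rangle$ is cyclic and take this generator $a$. Since $G$ is abelian, every $\pi\in\hat G$ is one-dimensional, so $\pi(a)$ has eigenvalue $1$ precisely when $\pi(a)=1$; but then $\pi$ is trivial on $\langle a\rangle=G$, i.e. $\pi=[1]$. Hence for this particular $a$ no $\pi\in\hat G\setminus\{[1]\}$ has $\pi(a)$ with eigenvalue $1$, which is exactly the negation of $(2)$.

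For $(1)\Rightarrow(2)$, fix an arbitrary $a\in G$ and set $H=\langle a\rangle$. Non-cyclicity gives $H\neq G$, so $n:=[G:H]\geq 2$. I would consider the permutation representation $\rho$ of $G$ on the inner-product space $\mathbb{C}[G/H]$ (left translation of cosets, with the coset basis orthonormal). The constant vector $\mathbf 1$ spans a copy of $[1]$, and its orthocomplement $W:=\mathbf 1^{\perp}$ is a $G$-invariant complement with $\dim W=n-1\geq 1$ which contains no copy of $[1]$ (the $G$-fixed vectors of $\mathbb{C}[G/H]$ are exactly the constants, since $G$ acts transitively on $G/H$). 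The crucial point is that $a\in H$ fixes the coset $H$, so $\rho(a)$ fixes the basis vector $\delta_H$, hence fixes the nonzero vector $v:=\delta_H-\tfrac1n\mathbf 1\in W$. Decomposing $W$ into irreducibles by Maschke's theorem, the $1$-eigenspace of $\rho(a)|_W$ is the direct sum of the $1$-eigenspaces of $\pi(a)$ over the irreducible summands $\pi$; since it contains $v\neq 0$, some summand $\pi$ — automatically $\neq[1]$ — has $\pi(a)$ with eigenvalue $1$. (Equivalently one can invoke Frobenius reciprocity: $\mathbb{C}[G/H]=\mathrm{Ind}_H^G[1]$ contains $[1]$ with multiplicity $1$ and dimension $n\geq 2$, so some $\pi\neq[1]$ occurs in it, and $\langle\mathrm{Ind}_H^G[1],\pi\rangle=\dim\pi^H\neq 0$ produces a vector fixed by $\pi(H)\ni\pi(a)$.)

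The only place requiring care is this last multiplicity step: one must check that the non-trivial part $W$ is genuinely nonzero — which is exactly where $n\geq 2$, i.e. non-cyclicity, is used — and that splitting $\rho(a)|_W$ along the irreducible decomposition preserves the presence of a $1$-eigenvector. I would also remark that $(1)\Rightarrow(2)$ is the finite-group shadow of the equidistribution criterion behind Section \ref{sec:homogeneous_equidistribution_on_compact_groups}, namely that a compact group $G$ equals $\overline{\langle a\rangle}$ iff no $\pi\in\hat G\setminus\{[1]\}$ has $\pi(a)$ with eigenvalue $1$; applied to a non-cyclic finite group, every element is a non-generator, which forces the existence of such a $\pi$.
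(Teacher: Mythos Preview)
Your proof is correct. The direction $(2)\Rightarrow(1)$ coincides with the paper's reasoning: it is the contrapositive of the $(1)\Rightarrow(2)$ direction of the corollary immediately preceding Corollary~\ref{noncyclic}, applied to a generator $a$. For $(1)\Rightarrow(2)$, however, the paper takes a different route. Corollary~\ref{noncyclic} carries no separate proof and is meant to be read off from that preceding corollary, whose relevant direction rests on Lemma~\ref{lemmatransunifequiofelement} and hence on the equidistribution and Peter--Weyl machinery of Section~\ref{sec:homogeneous_equidistribution_on_compact_groups}. Your argument via the permutation module $\mathbb{C}[G/\langle a\rangle]$ (equivalently, Frobenius reciprocity for $\mathrm{Ind}_{\langle a\rangle}^G[1]$) is genuinely more elementary: it avoids all analysis and uses only Maschke's theorem together with the transitivity of $G$ on $G/\langle a\rangle$ and a dimension count. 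What the paper's approach buys is a uniform statement for arbitrary compact Hausdorff groups, of which the finite case is then an immediate specialization --- precisely the connection you flag in your closing remark.
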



\section{Notation and Preliminary} 
\label{sec:notation}
In this paper, we focus on the case that $G$ is a compact Hausdorff group except few abelian examples. In these case, the Haar measure of $G$ are two-side invariant.

We denote $\widetilde{U}$ to be an arbitrary borel set with $U$ being an open set and $U\subseteq \widetilde{U}\subseteq \overline{U}$
(See also \cite[Chapter 3]{Uniform_Distribution_of_Sequences}, they call $\widetilde{U}$ a ``$\mu$-continuity set'' when $|\partial U|:=\mu(\partial U)=0$).

\begin{example*}
Let $G=S^1=[0,1)$ and $U=(0,\frac{1}{2})$, where $S^1$ denotes the circle group. There are $4$ possibilities of the expression $\widetilde{U}$, $(0,\frac{1}{2})$, $[0,\frac{1}{2})$, $(0,\frac{1}{2}]$ and $[0,\frac{1}{2}]$.
\end{example*}

Note that this notation $\widetilde{U}$ might somewhat confuse $[0,\frac{1}{2})$ with $(0,\frac{1}{2}]$, for instance. But we would like to use it carefully and still denote those sets by the same symbol.


We denote $\mathcal{A}$ to be an algebra generated by the sets $\{\text{every }\widetilde{U}|\partial U \text{ has measure }0\}$, that is, the smallest set containing every $\widetilde{U}$ with $|\partial U|=0$ which is closed under union operation and complement operation.

\begin{example*}
In the case of $G=S^1=[0,1)$, these sets $(0,\frac{1}{2})$, $[0,\frac{1}{2})$, $(0,\frac{1}{2}]$ and $[0,\frac{1}{2}]$ belong to $\mathcal{A}$.
\end{example*}

Looking $\mathcal{A}$ more closely, we have the following.
\begin{proposition}
\label{3forms}
The elements in $\mathcal{A}$ are one of the forms below:
\begin{enumerate}[{(1)}]
\item
$\widetilde{U}$, where $\partial U$ has measure $0$.
\item
$S=\widetilde{U}\setminus U\subseteq \partial U$, where $\widetilde{U}$ has the form (1).
\item
$\widetilde{U}\cup \cup_{i=1}^k S_i$, where $\widetilde{U}$ has the form (1), and $S_i$ have the form (2).
\end{enumerate}
\end{proposition}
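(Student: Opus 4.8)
The plan is to exhibit a concrete family $\mathcal{B}$ of Borel subsets of $G$, check that $\mathcal{B}$ is an algebra containing every generator $\widetilde{U}$ with $|\partial U|=0$, and check that $\mathcal{B}\subseteq\mathcal{A}$; minimality of $\mathcal{A}$ then gives $\mathcal{A}=\mathcal{B}$, and all that remains is to recognize the members of $\mathcal{B}$ as the sets in (1)--(3). I would let $\mathcal{B}$ be the collection of all sets
\[
W\cup S_1\cup\cdots\cup S_k\qquad(k\ge 0),
\]
where $W$ is open with $|\partial W|=0$ and each $S_i$ is a Borel subset of $\partial V_i$ for some open $V_i$ with $|\partial V_i|=0$. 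Such a set is of form (1) when $k=0$ (take $\widetilde{W}=W$), of form (2) when $W=\emptyset$ and $k=1$, and of form (3) in general (write $W=\widetilde{W}$, and $S_i=\widetilde{V_i}\setminus V_i$ with $\widetilde{V_i}:=V_i\cup S_i$); so $\mathcal{B}$ is exactly the union of the three classes in the statement.

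The topological input I would use is: $\partial A=\partial(G\setminus A)$; $\partial(A\cup B)\cup\partial(A\cap B)\subseteq\partial A\cup\partial B$; $\partial\overline{U}\subseteq\partial U$ (since $U\subseteq\operatorname{int}(\overline{U})$); and $\partial(\partial U)\subseteq\partial U$ (since $\partial U$ is closed). Since boundaries are closed, hence Borel, an inclusion into a null set yields measure zero by monotonicity. Consequently, if $U,V$ are open with null boundary, then $G\setminus\overline{U}$, $G\setminus\partial U$, $U\cup V$, and $U\cap V$ are all open with null boundary.

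The heart of the argument is that $\mathcal{B}$ is an algebra. Closure under finite unions is immediate: the open parts merge into one open set with null boundary and the boundary pieces accumulate. Closure under finite intersections follows by distributing $(W\cup\bigcup_i S_i)\cap(W'\cup\bigcup_j S'_j)$: the term $W\cap W'$ is open with null boundary, and every cross term is a Borel subset of one of the $S_i$ or $S'_j$, hence an admissible boundary piece. For complements one treats the blocks separately:
\[
G\setminus W=(G\setminus\overline{W})\cup\partial W,\qquad
G\setminus S=(G\setminus\partial V)\cup(\partial V\setminus S)\quad\text{for }S\subseteq\partial V,
\]
and both right-hand sides lie in $\mathcal{B}$ by the previous paragraph; then $G\setminus(W\cup S_1\cup\cdots\cup S_k)=(G\setminus W)\cap(G\setminus S_1)\cap\cdots\cap(G\setminus S_k)$ lies in $\mathcal{B}$ by closure under intersection.

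It remains to compare $\mathcal{A}$ and $\mathcal{B}$. Each generator $\widetilde{U}$ with $|\partial U|=0$ is in $\mathcal{B}$, being $U\cup(\widetilde{U}\setminus U)$ with $\widetilde{U}\setminus U\subseteq\partial U$; by minimality $\mathcal{A}\subseteq\mathcal{B}$. Conversely, an open $W$ with $|\partial W|=0$ is itself a generator $\widetilde{W}$, and any admissible $S\subseteq\partial V$ equals $(V\cup S)\setminus V$, a difference of two generators, hence lies in $\mathcal{A}$; so every member of $\mathcal{B}$ is in $\mathcal{A}$, giving $\mathcal{B}\subseteq\mathcal{A}$. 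Thus $\mathcal{A}=\mathcal{B}$, which is the claim. I expect the only real obstacle to be the complement step: verifying that passing to $G\setminus\overline{U}$ and $G\setminus\partial V$ keeps us among open sets with null boundary, and that every set-theoretic fragment produced along the way remains inside the boundary of some open null-boundary set rather than escaping to a larger set; the rest is routine set algebra.
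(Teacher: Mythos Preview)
Your proposal is correct and follows essentially the same route as the paper: both arguments show that the collection of sets of the three forms is closed under union and complement, hence contains the algebra $\mathcal{A}$ generated by the $\widetilde{U}$'s. The paper carries this out by a case-by-case analysis (form~(1) closed under union; complement of form~(1) is form~(1) or~(2); complement of form~(2) is form~(1); form~(3) closed under union; complement of form~(3) is a finite intersection of form~(1) sets, hence form~(1) or~(2)), whereas you package the same facts more uniformly by naming the family $\mathcal{B}$, verifying closure under $\cup$, $\cap$, and complement via the identities $G\setminus W=(G\setminus\overline{W})\cup\partial W$ and $G\setminus S=(G\setminus\partial V)\cup(\partial V\setminus S)$, and then sandwiching $\mathcal{A}$ between $\mathcal{B}$ and itself. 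Your topological inputs ($\partial\overline{U}\subseteq\partial U$, $\partial(\partial U)\subseteq\partial U$, $\partial(A\cup B)\subseteq\partial A\cup\partial B$) are exactly what is implicitly used in the paper's case analysis. One small bonus: you also check $\mathcal{B}\subseteq\mathcal{A}$, i.e.\ that every set of the three forms actually arises in the generated algebra; the paper does not state or need this reverse inclusion, since the proposition only asserts $\mathcal{A}\subseteq\mathcal{B}$.
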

Note that these forms are not disjoint.
\begin{proof}
$ \\$
\noindent 1. We first consider the union and complement of form (1). It is easy to see that the union of form (1) is also form (1). But for the complement of form (1), there are two possibilities, form (1) or form (2), determined by whether it is dense in $G$.

\noindent 2. For the form (2), let $S\subseteq \partial U$ be an element of the form (2) and $K$ be form (1) or (2) or (3), where $\widetilde{U}$ has the form (1). Then $S \cup K$ has the form (3) and $S^c$ has the form (1), this follows by the facts that
$\partial S^c = \partial S \subseteq \overline{S}\subseteq \partial U$ is measure $0$ and $S^c=int(S^c)\cup (S^c\setminus int(S^c))$.

\noindent 3. Finally, we consider the form (3).
It is easy to see that the union of the elements of the form (3) are also the form (3).

Let $\widetilde{U}\cup \cup_{i=1}^k S_i$ be an element of the form (3). Then its complement is $\widetilde{U}^c\cap (\cap_{i=1}^k S_i^c)$, which is the finite intersection of the form (1). And the finite intersection of the form (1) is either the form (1) or the form (2).

In conclusion, the sets generated by the form (1) are also of the three forms.
\end{proof}

Recall the representation theory of compact groups \cite[Chapter 5]{A_Course_in_Abstract_Harmonic_Analysis}. We denote $\hat{G}$ to be the set of unitary equivalence classes of irreducible unitary representations of the compact Hausdorff $G$. Let $[\pi ]\in \hat{G}$. Then $\pi :G\to U(H_{\pi})\subset M(H_{\pi})$ is a group homomorphism that $H_{\pi}$ is denoted to be the finite dimension complex Hilbert space w.r.t. $\pi$, see \cite[Theorem 5.2]{A_Course_in_Abstract_Harmonic_Analysis}. $M(H_{\pi})$ is denoted to be the collection of linear operators on $H_{\pi}$ and $U(H_{\pi})$ is its subcollection of unitary matries. We denote $\pi _{i,j}(x):=<\pi(x)e_i,e_j>$ to be the matrix elements, where $\{e_i\}$ is the standard basis of $H_{\pi}$.

\subsection{Notation for p-adic analysis} 
\label{sec:notation_for_p_adic_analysis}
This subsection is prepared for the discussions which refer to the p-adic groups.

Let $\mathbb{Q}_p$ denote the p-adic field and let $\mathbb{Z}_p$ denote its ring of integer for some prime number $p$. Since they are both DVR (discrete valuation ring), we denote their valuation by $v_p$, (ex: $v_5(100)=v_5(5^2)+v_5(4)=2+0=2$). Hence the p-adic norm $|\cdot|_p:=p^{-v_p(\cdot)}$ defines their topology. (See \cite{A_Course_in_p-adic_Analysis}).

We denote $M_a$ as a multiplication operator which is defined by
\[M_a(f)(x):=f(ax).\]
\begin{remark}
In the discussions of the p-adic groups the weighted translation operator will become $T_{a,w}(f)(x):=w(x)f(x-a)$, since each p-adic group is an additive group. Both weighted translation operators and multiplication operators act on the $L^{p'}(G)$, where $G=\mathbb{Q}_p$ or $\mathbb{Z}_p$ and $p'\in [1,\infty)$.
Notice that the notation for ``$p$'' is different from other places in this paper.
\end{remark}

\section{Homogeneous equidistribution on compact groups} 
\label{sec:homogeneous_equidistribution_on_compact_groups}
$ $

Let $\mathbb{C}(G)$ denote the set of continuous complex value functions on $G$. And we assume that $|G|=1$.
\begin{lemma}
\label{uniformlyconverge}
Let $G$ be a compact Hausdorff group, and let $a$ be an element in $G$ such that $\pi(a)$ do not have nontrivial fixed points in $H_{\pi}$(equivalently, $I_{H_{\pi}}-\pi(a)$ is invertible) for all $[\pi ]\in \hat{G}\setminus \{[1]\}$. Then for any $f\in \mathbb{C}(G)$, define $g_{f,N}(x):=\frac{1}{N} \sum\limits_{n=1}^{N-1}f(xa^{-n})$. We have
\[g_{f,N}\text{ converges uniformly to the constant }\int_G f.\]
\end{lemma}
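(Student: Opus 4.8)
The plan is to reduce the statement to the Peter–Weyl decomposition and handle each isotypic component separately. First I would observe that by the Peter–Weyl theorem the linear span of the matrix coefficients $\pi_{i,j}$, ranging over $[\pi]\in\hat G$, is uniformly dense in $\mathbb{C}(G)$. Since the operators $f\mapsto g_{f,N}$ are linear and uniformly bounded (indeed $\|g_{f,N}\|_\infty\le\|f\|_\infty$), it suffices to prove uniform convergence on this dense subspace, i.e. to verify the claim when $f=\pi_{i,j}$ for a single irreducible $\pi$, and then pass to the limit by a standard $3\varepsilon$-argument: given $f\in\mathbb{C}(G)$, pick a finite linear combination $h$ of matrix coefficients with $\|f-h\|_\infty<\varepsilon$, note $\|g_{f,N}-g_{h,N}\|_\infty<\varepsilon$ and $|\int_G f-\int_G h|<\varepsilon$, and use uniform convergence of $g_{h,N}$.

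Next I would compute $g_{f,N}$ explicitly for $f=\pi_{i,j}$. Using $\pi_{i,j}(xa^{-n})=\langle\pi(x)\pi(a)^{-n}e_i,e_j\rangle$, we get
\[
g_{\pi_{i,j},N}(x)=\Big\langle \pi(x)\,\Big(\tfrac1N\sum_{n=1}^{N-1}\pi(a)^{-n}\Big)e_i,\,e_j\Big\rangle .
\]
So everything comes down to the behaviour of the matrix averages $A_N:=\frac1N\sum_{n=1}^{N-1}\pi(a)^{-1}{}^{\,n}$ as $N\to\infty$. For the trivial representation $\pi=1$ this average is identically $1$, matching $\int_G \pi_{i,j}=1$. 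For $[\pi]\ne[1]$, the hypothesis says $I_{H_\pi}-\pi(a)$ is invertible, equivalently $1$ is not an eigenvalue of the unitary matrix $\pi(a)$ (hence not of $\pi(a)^{-1}$). A telescoping identity gives $(I-\pi(a)^{-1})\sum_{n=1}^{N-1}\pi(a)^{-n}=\pi(a)^{-1}-\pi(a)^{-N}$, so $A_N=\frac1N(I-\pi(a)^{-1})^{-1}(\pi(a)^{-1}-\pi(a)^{-N})$; since $\pi(a)$ is unitary, the operator norm of $\pi(a)^{-1}-\pi(a)^{-N}$ is at most $2$, whence $\|A_N\|\le \frac2N\|(I-\pi(a)^{-1})^{-1}\|\to 0$. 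Therefore $g_{\pi_{i,j},N}\to 0=\int_G\pi_{i,j}$ uniformly in $x$, because $|\langle\pi(x)A_Ne_i,e_j\rangle|\le\|A_N\|$ uniformly in $x$ (as $\pi(x)$ is unitary).

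Combining the two cases: for a general matrix coefficient $f=\pi_{i,j}$ we have $g_{f,N}\to\int_G f$ uniformly, and by linearity the same holds for finite linear combinations. The density/boundedness argument from the first paragraph then upgrades this to all of $\mathbb{C}(G)$, finishing the proof. I do not anticipate a serious obstacle here; the only points requiring a little care are: (i) confirming uniformity in $x$, which is automatic since $\pi(x)$ is unitary so contributes no growth to the norm estimate, and (ii) making the Peter–Weyl approximation step rigorous, which is the routine $3\varepsilon$ argument enabled by the uniform bound $\|g_{f,N}\|_\infty\le\|f\|_\infty$. The slight bookkeeping nuisance — that the sum runs from $n=1$ to $N-1$ rather than $0$ to $N-1$ — changes nothing, since it only affects $A_N$ by a single bounded term divided by $N$.
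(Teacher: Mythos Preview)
Your proposal is correct and follows essentially the same route as the paper: reduce to matrix coefficients via Peter--Weyl density together with the uniform bound $\|g_{f,N}\|_\infty\le\|f\|_\infty$ and a $3\varepsilon$ argument, then handle each nontrivial $\pi$ by the telescoping identity and unitarity to get $\|A_N\|\le \frac{2}{N}\|(I-\pi(a))^{-1}\|\to 0$. The only cosmetic difference is that the paper explicitly proves $\int_G\pi_{i,j}=0$ for $[\pi]\ne[1]$ via Schur's lemma, whereas you invoke it as a known fact.
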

\begin{proof}
Let $S$ be the collection consisting all $f$ which satisfies this Lemma. It is non-empty, since all constant functions belong to $S$.

Recall that $span\left\{\pi _{i,j}|[\pi ]\in \hat{G} \text{ and $\pi _{i,j}$ be an matrix element of $\pi$.}\right\}$ is dense in $\mathbb{C}(G)$ \cite[Theorem 5.11]{A_Course_in_Abstract_Harmonic_Analysis}. In particular, the only closed subspace of $\mathbb{C}(G)$ containing all matrix elements is $\mathbb{C}(G)$ itself. So it is sufficient to show $S$ is a closed subspace containing all matrix elements.

Note that $\left\|g_{h,N}\right\|_{\infty}\le \left\|h\right\|_{\infty}$ for any $h\in \mathbb{C}(G)$.

\noindent\textbf{Claim}: $S$ is a closed subspace in $\mathbb{C}(G)$ (w.r.t. $\|\cdot\|_{\infty}$).

$S$ is a subspace, since for any $f,f'\in S$ and $A,B\in \mathbb{C}$ we have $g_{Af+Bf',N}=Ag_{f,N}+Bg_{f',N}$. So $g_{Af+Bf',N}$ also converges uniformly to the constant $\int_G Af+Bf'$.

Let $f_i\in S$ and $f_i$ converges uniformly to $f$. That means that for any $\varepsilon >0$, we have $\left\|f-f_{i_0}\right\|_{\infty}<\frac{\varepsilon }{3}$ for some ${i_0}$ and there is a number $N'$ such that $\left\|g_{f_{i_0},N}-\int_G f_{i_0}\right\|_{\infty}<\frac{\varepsilon }{3}$ for all $N>N'$. Then 
\begin{align*}
\left\|g_{f,N}-\int_G f\right\|_{\infty}
&\le \left\|g_{f_{i_0},N}-\int_G f_{i_0}\right\|_{\infty}+
\left\|g_{f_{i_0},N}-g_{f,N}\right\|_{\infty}+
\left\|\int_G f_{i_0}-\int_G f\right\|_{\infty}\\
&\le \left\|g_{f_{i_0},N}-\int_G f_{i_0}\right\|_{\infty}+
\left\|g_{(f_{i_0}-f),N}\right\|_{\infty}+
\left\|f_{i_0}-f\right\|_{\infty}\\
&\le \left\|g_{f_{i_0},N}-\int_G f_{i_0}\right\|_{\infty}+
2\left\|f_{i_0}-f\right\|_{\infty}\\
&< \frac{\varepsilon }{3}+\frac{2\varepsilon }{3}=\varepsilon.
\end{align*}
Hence $S$ is a closed subspace.

\noindent\textbf{Claim}: If $[\pi ]\in \hat{G}\setminus \{[1]\}$, then $\pi_{i,j} \in S$.

Firstly, we will show that $\int_G \pi$ is the zero matrix. Hence $\int_G \pi_{i,j}=0$.

By the property of Haar measure on compact groups that it is two-sided invariant, we have
\begin{align*}
(\int_G \pi)&= (\int_G \pi(x) dx)\\
			&= (\int_G \pi(xy) dx) = (\int_G \pi(x) dx)\pi(y) = (\int_G \pi)\pi(y)
\end{align*}
and
\begin{align*}
(\int_G \pi)&= (\int_G \pi(x) dx)\\
			&= (\int_G \pi(yx) dx) = \pi(y)(\int_G \pi(x) dx) = \pi(y)(\int_G \pi)
\end{align*}
for any $y\in G$. So $\int_G \pi$ commutes with all $\pi(G)$, by Schur's Lemma \cite[Lemma 3.5]{A_Course_in_Abstract_Harmonic_Analysis}, $\int_G \pi = c I_{H_{\pi}}$ for some $c\in \mathbb{C}$. If $c\neq 0$, then $(\int_G \pi) = \pi(y)(\int_G \pi)$ implies $I_{H_{\pi}}=\pi(y)$ for all $y\in G$, which contradicts to the assumption that $[\pi ]\neq [1]$. Thus $\int_G \pi$ is zero matrix.

Secondly, let $\left\|\cdot\right\|_M$ denote the operator norm. We will show that $\left\|\left\|g_{\pi, N}\right\|_M\right\|_{\infty}\to 0$ as $N\to \infty$. In particular, $\left\|g_{\pi_{i,j}, N}-\int_G \pi_{i,j}\right\|_{\infty}=\left\|g_{\pi_{i,j}, N}\right\|_{\infty}\to 0$ as $N\to \infty$. Hence $\pi_{i,j}\in S$.

\begin{align*}
\left\|\left\|g_{\pi, N}\right\|_M\right\|_{\infty}
&=\frac{1}{N} \left\|\left\|\sum\limits_{n=1}^{N-1}\pi(\cdot a^{-n})\right\|_M\right\|_{\infty}\\
&=\frac{1}{N} \left\|\left\|\sum\limits_{n=1}^{N-1}\pi(\cdot )\pi(a^{-n})\right\|_M\right\|_{\infty}\\
&\le\frac{1}{N} \left\|\left\|\pi(\cdot )\right\|_M\left\|\sum\limits_{n=1}^{N-1}\pi(a)^{-n}\right\|_M\right\|_{\infty}\\
&=\frac{1}{N} \left\|\sum\limits_{n=1}^{N-1}\pi(a)^{-n}\right\|_M\text{ ($\pi(x)$ is an unitary matrix for each $x$.)}\\
&\le\frac{1}{N} \left\|(I_{H_{\pi}}-\pi(a))^{-1}\right\|_M\left\|I_{H_{\pi}}-\pi(a)^N\right\|_M\\
&\le\frac{2}{N} \left\|(I_{H_{\pi}}-\pi(a))^{-1}\right\|_M\to 0 \text{  as $N\to \infty.$}
\end{align*}

\end{proof}

\begin{definition}
\label{deftransunifequi}
Let $G$ be a compact Hausdorff group. We say a sequence $\{x_k\}_{k\in \mathbb{N}}$ is a homogeneous equidistribution w.r.t. $A$, if $A$ is a collection of subsets of $G$ and 
\[\sup_{x\in G}\left|Dens_{N,K,\{x_k\}_{k\in \mathbb{N}}}(x)-|K|\right|\to 0 \text{ as } N\to \infty \text{ for any } K\in A,\]
where $Dens_{N,K,\{x_k\}_{k\in \mathbb{N}}}(x):=dens_N(x^{-1}K,\{x_k\}_{k\in \mathbb{N}})$ and
\[dens_N(K,\{x_k\}_{k\in \mathbb{N}}):=\frac{\#\{x_k\in K|k<N\}}{N}=\frac{1}{N}\sum\limits_{k=1}^{N-1}\chi_K(x_k).\]
\end{definition}

\begin{lemma}
\label{lemmatransunifequiofelement}
Let $G$ be a compact Hausdorff group, and $a$ be an element in $G$ such that $\pi(a)$ do not have nontrivial fixed points in $H_{\pi}$ for all $[\pi ]\in \hat{G}\setminus \{[1]\}$. Then $\{a^{-n}\}_{n\in \mathbb{N}}$ is a homogeneous equidistribution w.r.t. $\mathcal{A}$.
\end{lemma}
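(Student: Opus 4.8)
The plan is to unravel Definition~\ref{deftransunifequi} and then reduce everything to Lemma~\ref{uniformlyconverge} by a continuous-approximation (squeeze) argument. Writing out the definitions, $Dens_{N,K,\{a^{-n}\}}(x)=dens_N(x^{-1}K,\{a^{-n}\})=\frac1N\sum_{n=1}^{N-1}\chi_K(xa^{-n})=g_{\chi_K,N}(x)$, so the statement to prove is exactly
\[\sup_{x\in G}\bigl|g_{\chi_K,N}(x)-|K|\bigr|\to 0\qquad(N\to\infty)\]
for every $K\in\mathcal A$. By Lemma~\ref{uniformlyconverge} we already know $g_{h,N}\to\int_G h$ uniformly for every $h\in\mathbb C(G)$; the one obstruction is that $\chi_K$ is not continuous. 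By Proposition~\ref{3forms} it is enough to treat $K$ of forms (1) and (2): a set $\widetilde U\cup\bigcup_{i=1}^{k}S_i$ of form (3) has $|K|=|\widetilde U|$ and satisfies $\chi_{\widetilde U}\le\chi_K\le\chi_{\widetilde U}+\sum_{i=1}^{k}\chi_{S_i}$ pointwise, and since $h\mapsto g_{h,N}$ is monotone and linear in $h$, the form-(3) estimate follows from the form-(1) and form-(2) estimates by a sandwich.

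For form (1), let $K=\widetilde U$ with $|\partial U|=0$, so $|K|=|U|=|\overline U|$. I would trap $\chi_{\widetilde U}$ between two continuous functions using regularity of the Haar measure and Urysohn's lemma. By inner regularity choose a compact $C\subseteq U$ with $|C|>|U|-\varepsilon$ and a continuous $h_-\colon G\to[0,1]$ with $h_-=1$ on $C$ and $\operatorname{supp}h_-\subseteq U$; then $h_-\le\chi_U\le\chi_{\widetilde U}$ and $\int_G h_-\ge|C|>|U|-\varepsilon$. By outer regularity choose an open $V\supseteq\overline U$ with $|V|<|U|+\varepsilon$ and a continuous $h_+\colon G\to[0,1]$ with $h_+=1$ on $\overline U$ and $\operatorname{supp}h_+\subseteq V$; then $\chi_{\widetilde U}\le\chi_{\overline U}\le h_+$ and $\int_G h_+\le|V|<|U|+\varepsilon$. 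Since $g_{h_-,N}(x)\le g_{\chi_{\widetilde U},N}(x)\le g_{h_+,N}(x)$ for all $x$, applying Lemma~\ref{uniformlyconverge} to $h_\pm$ gives $|U|-\varepsilon\le\liminf_N\inf_x g_{\chi_{\widetilde U},N}(x)$ and $\limsup_N\sup_x g_{\chi_{\widetilde U},N}(x)\le|U|+\varepsilon$; letting $\varepsilon\to0$ yields the form-(1) estimate.

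For form (2), let $S\subseteq\partial U$ with $|\partial U|=0$, so $|K|=0$ and $0\le\chi_S\le\chi_{\partial U}$. Since $\partial U$ is compact (closed in $G$) of measure zero, outer regularity gives an open $V\supseteq\partial U$ with $|V|<\varepsilon$ and Urysohn's lemma a continuous $h\colon G\to[0,1]$ with $h=1$ on $\partial U$ and $\operatorname{supp}h\subseteq V$; then $0\le g_{\chi_S,N}(x)\le g_{h,N}(x)$ for all $x$, and Lemma~\ref{uniformlyconverge} gives $\limsup_N\sup_x g_{\chi_S,N}(x)\le\int_G h\le|V|<\varepsilon$. Letting $\varepsilon\to0$ finishes form (2), and with the reduction above the whole lemma.

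The only genuine obstacle is the discontinuity of $\chi_K$, and it is dissolved entirely by regularity of the Haar measure together with Urysohn's lemma. No additional effort is required to make the convergence uniform in $x$: the sandwich inequalities $h_-\le\chi_K\le h_+$ hold simultaneously for every $x$, and Lemma~\ref{uniformlyconverge} already supplies convergence in $\|\cdot\|_\infty$, so the supremum over $x$ is controlled automatically.
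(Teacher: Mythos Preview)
Your proof is correct and follows essentially the same approach as the paper: both sandwich $\chi_{\widetilde U}$ between continuous functions built via Urysohn's lemma and regularity of the Haar measure, then invoke Lemma~\ref{uniformlyconverge}. The only cosmetic difference is in the reduction for forms (2) and (3): the paper uses an additivity identity $Dens_{N,X\cup Y}=Dens_{N,X}+Dens_{N,Y}$ for disjoint $X,Y$ (writing $\widetilde U=S\sqcup U$ for form (2) and a disjoint decomposition for form (3)), whereas you handle form (2) by a direct squeeze $0\le\chi_S\le h$ and form (3) by the pointwise bound $\chi_{\widetilde U}\le\chi_K\le\chi_{\widetilde U}+\sum_i\chi_{S_i}$; both mechanisms are equally valid and equally short.
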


\begin{proof}
We divide the proof into two parts. For the first part, we will prove that $\{a^{-n}\}_{n\in \mathbb{N}}$ is a homogeneous equidistribution w.r.t. the form (1) in $\mathcal{A}$. (We have defined $\mathcal{A}$ in Section \ref{sec:notation}.)

For any $\varepsilon>0$ and $U$ open in $G$ with $\partial U$ being measure $0$, define $f_{U,\varepsilon}^+$ and $f_{U,\varepsilon}^-$ to be some continuous functions which satisfy the following:

\begin{enumerate}[{(1)}]
\item
Both image of $f_{U,\varepsilon}^+$ and $f_{U,\varepsilon}^-$ lie in $[0,1]$.
\item
$f_{U,\varepsilon}^+|_{\overline{U}}\equiv 1$ and $f_{U,\varepsilon}^+|_{V^c}\equiv 0$, where $V$ is an open neighborhood of $\overline{U}$ such that $|V\setminus U|=|V\setminus \overline{U}|<\varepsilon/2$.
\item
$f_{U,\varepsilon}^-|_{K}\equiv 1$ and $f_{U,\varepsilon}^-|_{U^c}\equiv 0$, where $K$ is a compact subset of $U$ such that $|U\setminus K|=|\overline{U}\setminus K|<\varepsilon/2$.
\end{enumerate}

The existence of $f_{U,\varepsilon}^+$ and $f_{U,\varepsilon}^-$ follow by the Uryshon's lemma (It works, since $G$ is compact Hausdorff, hence normal.) and the regularity of Haar measure. From the construction, we immediatly have
\[\chi_{K} \le f_{U,\varepsilon}^-\le \chi_{\widetilde{U}} \le f_{U,\varepsilon}^+ \le \chi_{V},\]
hence (use the notation in the proof of Lemma \ref{uniformlyconverge})
\[g_{f_{U,\varepsilon}^-,N}\le dens_{N}(x^{-1}\widetilde{U},\{a^{-n}\}_{n\in \mathbb{N}}) \le g_{f_{U,\varepsilon}^+,N}\text{ for any $N$.}\]
By Lemma \ref{uniformlyconverge}, there exists $N'$ independent of $x$ such that 
\begin{align*}
|\widetilde{U}|-\varepsilon 
&= \int_G \chi_{\widetilde{U}}-\varepsilon \\
&\le \int_G f_{U,\varepsilon}^- -\varepsilon/2 \\
&< g_{f_{U,\varepsilon}^-,N}\\
&\le dens_{N}(x^{-1}\widetilde{U},\{a^{-n}\}_{n\in \mathbb{N}})
= Dens_{N,\widetilde{U},\{a^{-n}\}_{n\in \mathbb{N}}}(x)\\
&\le g_{f_{U,\varepsilon}^+,N}\\
&< \int_G f_{U,\varepsilon}^+ +\varepsilon/2 \\
&\le \int_G \chi_{\widetilde{U}}+\varepsilon \\
&=|\widetilde{U}|+\varepsilon 
\end{align*}
for all $N>N'$. Then we have 
\[\sup_{x\in G}\left|Dens_{N,\widetilde{U},\{a^{-n}\}_{n\in \mathbb{N}}}(x)-|\widetilde{U}| \right|<\varepsilon.\]

For the sencond part, we will consider the additivity of the function $Dens$.

Let $X,Y$ be disjoint subsets of $G$. If any two of the followings below hold, then so does the third.
\begin{enumerate}[{(1)}]
\item
$\{a^{-n}\}_{n\in \mathbb{N}}$ is a homogeneous equidistribution w.r.t. $\{X\}$.
\item
$\{a^{-n}\}_{n\in \mathbb{N}}$ is a homogeneous equidistribution w.r.t. $\{Y\}$.
\item
$\{a^{-n}\}_{n\in \mathbb{N}}$ is a homogeneous equidistribution w.r.t. $\{X\cup Y\}$.
\end{enumerate}
This follows by the equations 
\[Dens_{N,X\cup Y,\{a^{-n}\}_{n\in \mathbb{N}}}=Dens_{N,X,\{a^{-n}\}_{n\in \mathbb{N}}}+Dens_{N,Y,\{a^{-n}\}_{n\in \mathbb{N}}}\]
and 
\[|X\cup Y|=|X|+|Y|.\]

For the form (2) in $\mathcal{A}$. Let $S=\widetilde{U}\setminus U$, where $\widetilde{U}$ has the form (1). We can write $\widetilde{U}=S\cup U$ as a disjoint union. Hence $\{a^{-n}\}_{n\in \mathbb{N}}$ is a homogeneous equidistribution w.r.t. $\{S\}$, since both $\widetilde{U}$ and $U$ are form (1).

Similar to the form (3) in $\mathcal{A}$. We can write it as the finite disjoint union of the form (1) and (2). So $\{a^{-n}\}_{n\in \mathbb{N}}$ is a homogeneous equidistribution w.r.t. the form (3) in $\mathcal{A}$.

Hence the lemma has been proved.

\end{proof}

\begin{corollary}
Let $G$ be a compact Hausdorff group and $a\in G$. Then the following are equivalent.
\begin{enumerate}[{(1)}]
\item
$G=\overline{<a>}$.
\item
For any $\pi \in \hat{G}\setminus \{[1]\}$, $\pi(a)$ does not have nontrivial fixed points in $H_{\pi}$ for all $[\pi ]\in \hat{G}\setminus \{[1]\}$.
\end{enumerate}
\end{corollary}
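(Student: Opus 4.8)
The plan is to prove the two implications separately; in both directions the substantive analytic input has already been isolated, so the remaining work is elementary group theory plus Lemma \ref{uniformlyconverge}.

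For $(1)\Rightarrow(2)$, I would argue directly. Assume $G=\overline{<a>}$ and suppose, for contradiction, that some $[\pi]\in\hat G\setminus\{[1]\}$ admits a vector $v\neq 0$ with $\pi(a)v=v$. The stabiliser $\{g\in G:\pi(g)v=v\}$ is a closed subgroup of $G$: it is a subgroup because $\pi$ is a homomorphism, and it is closed because $g\mapsto\pi(g)v$ is continuous. Since it contains $a$, it contains $<a>$, hence its closure $\overline{<a>}=G$. Therefore $\pi(g)v=v$ for every $g\in G$, so $\mathbb{C}v$ is a nonzero $G$-invariant subspace of $H_\pi$; by irreducibility $H_\pi=\mathbb{C}v$, and then $\pi(g)=\mathrm{id}$ for all $g$, i.e.\ $[\pi]=[1]$, contradicting the choice of $\pi$. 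This direction is routine and uses only density of $<a>$ in $G$ and continuity of the representation.

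For $(2)\Rightarrow(1)$, I would argue by contradiction using Lemma \ref{uniformlyconverge}, whose hypothesis is exactly condition $(2)$. Suppose $H:=\overline{<a>}$ is a proper closed subgroup of $G$. Then $G\setminus H$ is a nonempty open set, and since $G$ is compact Hausdorff (hence normal), Urysohn's lemma gives a continuous $f:G\to[0,1]$ with $f|_{H}\equiv 0$ and $f(b)=1$ for some fixed $b\in G\setminus H$. Because $f$ is continuous with $f(b)=1$, it exceeds $\tfrac12$ on an open neighbourhood of $b$, which has positive Haar measure, so $\int_G f>0$. On the other hand every power $a^{-n}$ lies in $<a>\subseteq H$, where $f$ vanishes, so the averages of Lemma \ref{uniformlyconverge} satisfy $g_{f,N}(e)=\frac1N\sum_{n=1}^{N-1}f(a^{-n})=0$ for all $N$; yet that lemma forces $g_{f,N}(e)\to\int_G f>0$. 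This contradiction shows $H=G$, which is $(1)$.

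I do not expect a genuine obstacle here: the only point needing a little care is choosing the test function $f$ to be simultaneously continuous, nonnegative, identically zero on $H$, and of positive integral, and this is precisely what Urysohn's lemma together with positivity of Haar measure on nonempty open sets delivers. (One could instead run $(2)\Rightarrow(1)$ through representation theory, noting that $L^2(G/H)$ is more than one-dimensional and that, since its $G$-fixed part consists only of constants, some nontrivial $[\pi]$ occurs in it and hence has an $H$-fixed, a fortiori $a$-fixed, vector; but the bump-function argument above is shorter and avoids having to invoke Frobenius reciprocity.)
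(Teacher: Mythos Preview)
Your proof is correct, but both directions take a somewhat different route from the paper. For $(1)\Rightarrow(2)$ the paper first observes that $G=\overline{\langle a\rangle}$ forces $G$ to be abelian, so every $[\pi]\in\hat G$ is one-dimensional and the claim reduces to showing $\pi(a)\neq 1$ for nontrivial characters; your stabiliser argument bypasses the abelianness observation and works directly with irreducibility, which is a touch longer but more conceptual and would apply verbatim in settings where commutativity is not immediately available. For $(2)\Rightarrow(1)$ the paper invokes the stronger Lemma~\ref{lemmatransunifequiofelement} to get homogeneous equidistribution of $\{a^{-n}\}$ with respect to~$\mathcal A$ and then reads off density, whereas you stay with the weaker Lemma~\ref{uniformlyconverge} and a single Urysohn bump function, evaluating the averages at the identity. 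Your argument is more economical here, since Lemma~\ref{lemmatransunifequiofelement} is itself a nontrivial consequence of Lemma~\ref{uniformlyconverge}; the paper's route, on the other hand, situates the corollary as an immediate by-product of the equidistribution machinery it has already built for later use.
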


\begin{proof}
(1$\Rightarrow $2)
Since $G=\overline{<a>}$ is abelian, $\hat{G}$ is its daul group. And we know that $\dim H_{\pi} = 1$ for all $\pi\in \hat{G}$. So for any $\pi \in \hat{G}\setminus \{1\}$, $1-\pi(a)\neq 0$ is invertible, since if $\pi(a)=1$, we have $\pi(a^n)=1$ for all $n\in \mathbb{Z}$, a contradiction as $\pi\equiv 1$.

(2$\Rightarrow $1)
By Lemma \ref{lemmatransunifequiofelement}, $\{a^{-n}\}_{n\in \mathbb{N}}$ is a homogeneous equidistribution w.r.t. $\mathcal{A}$. This implies that $\{a^{-n}\}_{n\in \mathbb{N}}$ is dense in $G$ (it's sufficient to say any that non-empty open set in $G$ contains an element of form (1) in $\mathcal{A}$, see \cite[Chapter 3, example 1.2.]{Uniform_Distribution_of_Sequences}), hence $G=\overline{<a>}$.
\end{proof}

\begin{corollary}
\label{noncyclic}
Let $G$ be a finite group. Then the following are equivalent.
\begin{enumerate}[{(1)}]
\item
$G$ is non-cyclic.
\item
For any $a \in G$, there exists $\pi \in \hat{G}\setminus \{[1]\}$ such that $\pi(a)$ has an eigenvalue $1$.
\end{enumerate}
\end{corollary}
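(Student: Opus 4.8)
The plan is to prove Corollary~\ref{noncyclic} by applying the previous corollary, using the special structure of a finite group $G$. Recall that for a finite group, $\hat G$ is a finite set, every $H_\pi$ is finite-dimensional, and $\langle a\rangle = \overline{\langle a\rangle}$ is automatically closed and compact, so $a$ is periodic and the notions of "torsion" and "periodic" coincide. The key observation is that $G = \overline{\langle a\rangle}$ if and only if $G$ is cyclic with generator $a$; so $G$ being \emph{non-cyclic} is exactly the statement that $G \neq \overline{\langle a\rangle}$ for \emph{every} $a \in G$.

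First I would set up the logical skeleton. By the preceding corollary (applied with the same $G$ and a fixed $a$), the two conditions
\begin{enumerate}[{(i)}]
\item $G = \overline{\langle a\rangle}$,
\item for every $\pi \in \hat G \setminus \{[1]\}$, $\pi(a)$ has no nontrivial fixed point in $H_\pi$ (equivalently $I_{H_\pi} - \pi(a)$ is invertible, equivalently $1$ is not an eigenvalue of $\pi(a)$),
\end{enumerate}
are equivalent. Negating both: $G \neq \overline{\langle a\rangle}$ if and only if there exists $\pi \in \hat G \setminus \{[1]\}$ with $1$ an eigenvalue of $\pi(a)$. Now quantify over $a$: "$G$ is non-cyclic" $\iff$ "$G \neq \overline{\langle a\rangle}$ for all $a \in G$" $\iff$ "for all $a \in G$ there exists $\pi \in \hat G \setminus \{[1]\}$ with $\pi(a)$ having eigenvalue $1$," which is precisely condition~(2) of the corollary. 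So the proof is essentially a one-line deduction once the translation "$G = \overline{\langle a\rangle} \iff G$ cyclic with generator $a$" is in place.

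The two small points I would make explicit are: (a) for finite $G$, $\langle a\rangle$ is already closed, so $\overline{\langle a\rangle} = \langle a\rangle$, hence $G = \overline{\langle a\rangle}$ is literally "$a$ generates $G$"; and (b) the phrase "for all $[\pi]\in \hat G \setminus \{[1]\}$" attached to condition~(2) of the earlier corollary is (as written) redundant with the "for any $\pi$" quantifier, and the content is just "$1$ is not an eigenvalue of $\pi(a)$ for any nontrivial $\pi$" — I would restate it cleanly to avoid confusion. I do not anticipate any real obstacle here: the corollary is a purely formal contrapositive-and-quantifier-swap consequence of its predecessor, and the only thing to be careful about is that the negation of an "$\exists \pi$" statement inside the "$\forall a$" is handled correctly, which it is.

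\begin{proof}
Since $G$ is finite, for each $a \in G$ the subgroup $\langle a\rangle$ is finite, hence closed, so $\overline{\langle a\rangle} = \langle a\rangle$. Thus $G = \overline{\langle a\rangle}$ holds if and only if $a$ generates $G$. Consequently $G$ is non-cyclic if and only if $G \neq \overline{\langle a\rangle}$ for every $a \in G$.

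Fix $a \in G$. By the previous corollary, $G = \overline{\langle a\rangle}$ if and only if $\pi(a)$ has no nontrivial fixed point in $H_\pi$ for every $[\pi] \in \hat G \setminus \{[1]\}$; equivalently (since $\pi(a)$ is a unitary matrix on the finite-dimensional space $H_\pi$, so $I_{H_\pi} - \pi(a)$ is invertible exactly when $1$ is not an eigenvalue of $\pi(a)$), $G = \overline{\langle a\rangle}$ if and only if $1$ is not an eigenvalue of $\pi(a)$ for any $[\pi] \in \hat G \setminus \{[1]\}$. Negating, $G \neq \overline{\langle a\rangle}$ if and only if there exists $\pi \in \hat G \setminus \{[1]\}$ such that $\pi(a)$ has $1$ as an eigenvalue.

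Combining the two displayed equivalences: $G$ is non-cyclic if and only if for every $a \in G$ there exists $\pi \in \hat G \setminus \{[1]\}$ with $\pi(a)$ having eigenvalue $1$. This is the asserted equivalence of $(1)$ and $(2)$.
\end{proof}
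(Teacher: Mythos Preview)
Your proof is correct and is precisely the intended argument: the paper states this corollary without proof, treating it as an immediate contrapositive-and-quantifier consequence of the preceding corollary, which is exactly what you have written out.
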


\section{A necessary condition of hypercyclic weighted translation operators} 
\label{sec:a_necessary_condition_of_hypercyclic_weighted_translation_operators}

\begin{theorem}
\label{conditionofwsoncompactgp}
Let $G$ be a compact Hausdorff group, and let $a$ be an element in $G$ such that $G=\overline{<a>}$. If $T_{a,w}$ is hypercyclic on $L^p(G)$ for some $p\in [1,\infty )$, then 
\[\int_G \ln w = 0.\]
\end{theorem}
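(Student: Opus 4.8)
The plan is to feed $\ln w$ into Lemma~\ref{uniformlyconverge}, read off the exponential behaviour of the iterates $T_{a,w}^n$, and then show that hypercyclicity is incompatible with a nonzero value of $\int_G\ln w$.

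First I would clear away the trivial case: if $G$ is finite then $a$ generates $G=\overline{<a>}=<a>$ and hence is torsion, so $T_{a,w}$ is not hypercyclic by \cite[Lemma~1.1]{Hypercyclic_on_groups} and there is nothing to prove. So assume $G$ is infinite; then its Haar measure is non-atomic and $L^p(G)$ is infinite-dimensional, a fact I will use at the end. Next, an easy induction gives
\[T_{a,w}^n f(x)=\varphi_n(x)\,f(xa^{-n}),\qquad \varphi_n(x):=\prod_{k=0}^{n-1}w(xa^{-k}).\]
Since $w$ is continuous and $G$ compact, $w$ is bounded above and bounded away from $0$, so $\ln w\in\mathbb{C}(G)$. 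Because $G=\overline{<a>}$ is monothetic it is abelian, and the Corollary following Lemma~\ref{lemmatransunifequiofelement} guarantees that $a$ satisfies the hypothesis of Lemma~\ref{uniformlyconverge}. Observing that $\tfrac1n\ln\varphi_n(x)=\tfrac1n\sum_{k=0}^{n-1}\ln w(xa^{-k})$ differs from $g_{\ln w,n}(x)$ only by the uniformly vanishing term $\tfrac1n\ln w(x)$, Lemma~\ref{uniformlyconverge} then yields
\[\tfrac1n\ln\varphi_n\longrightarrow c:=\int_G\ln w\qquad\text{uniformly on }G.\]

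The second step is to convert this into growth estimates for $\|T_{a,w}^n f\|_p$. By translation invariance of the Haar measure,
\[\|T_{a,w}^n f\|_p^p=\int_G\varphi_n(x)^p\,|f(xa^{-n})|^p\,dx,\]
and the uniform convergence above gives, for every $\varepsilon>0$ and all large $n$, the two-sided estimate $e^{n(c-\varepsilon)}\|f\|_p\le\|T_{a,w}^n f\|_p\le e^{n(c+\varepsilon)}\|f\|_p$ for all $f\in L^p(G)$. Now suppose $T_{a,w}$ has a hypercyclic vector $f_0\neq0$. If $c>0$, choosing $\varepsilon=c/2$ forces $\|T_{a,w}^n f_0\|_p\to\infty$, so no subsequence of the orbit converges to $0$, contradicting its density. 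If $c<0$, choosing $\varepsilon=-c/2$ forces $\|T_{a,w}^n f\|_p\to0$ for \emph{every} $f$, so $\overline{orb(T_{a,w},f_0)}$ is a convergent sequence together with its limit, hence compact, hence not dense in the infinite-dimensional space $L^p(G)$ --- again a contradiction. Therefore $c=0$.

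I expect the only delicate points to be bookkeeping rather than substance: checking that $\ln w$ really is a legitimate input to Lemma~\ref{uniformlyconverge} (continuity, and the no-nontrivial-fixed-point condition, which is exactly where $G=\overline{<a>}$ enters via the Corollary), the harmless index shift between $\varphi_n$ and $g_{\ln w,n}$, and making sure the finite-group case is excluded so that the compactness and unboundedness obstructions to density actually bite. The genuine mathematical content is all packaged inside Lemma~\ref{uniformlyconverge}.
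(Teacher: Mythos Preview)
Your argument is correct, and it is genuinely more direct than the paper's. The paper does not feed $\ln w$ into Lemma~\ref{uniformlyconverge}; instead it passes through Lemma~\ref{lemmatransunifequiofelement} and the algebra $\mathcal{A}$, approximating $W=\ln w$ from above and below by $\mathcal{A}$-step functions $\Phi=\ln\phi$, using equidistribution on the level sets $E_i$ to control $(\phi_N)^{1/N}$, and then invoking Ansari's theorem so that $T_{a,w}^N=T_{a^N,w_N}$ is again hypercyclic and hence cannot have $w_N\ge 1$ or $w_N\le 1$ everywhere. Your route bypasses all of this: since $\ln w$ is already continuous, Lemma~\ref{uniformlyconverge} applies to it directly, giving uniform convergence of $\tfrac1n\ln\varphi_n$ to $c$, and your orbit-norm dichotomy disposes of $c\neq 0$ without needing Ansari or any step-function approximation. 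What the paper's approach buys is visible in the Remark following the theorem: its proof only uses the weaker hypothesis ``$w_n\not\ge 1$ and $w_n\not\le 1$ for all $n$'' rather than full hypercyclicity; but your uniform estimate $e^{n(c-\varepsilon)}\le\varphi_n\le e^{n(c+\varepsilon)}$ recovers this too, since $c\neq 0$ forces $\varphi_n$ to lie entirely above or below $1$ for large $n$. Two cosmetic remarks: your treatment of the finite case is fine but not strictly needed (in the $c<0$ branch the orbit closure is compact, hence bounded, hence a proper closed subset of any nonzero normed space, finite-dimensional or not); and the index discrepancy you flag between $\varphi_n$ and $g_{\ln w,n}$ is handled correctly.
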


\begin{proof}
First, we will observe some facts for the step functions.

Let $\phi> 0$ be a strictly positive $\mathcal{A}$-step function (i.e $\phi$ can write as $\sum\limits_{i=1}^k \alpha_i \chi_{E_i}$, where $\alpha_i>0$ and $E_i\in \mathcal{A}$ are disjoint). Set $m_i:=|E_i|$. By Lemma \ref{lemmatransunifequiofelement}, for any $\varepsilon >0$, there exists $N'$ independent of $x$ such that 
\[m_i-\varepsilon< \frac{1}{N} \#\{xa^{-n}\in E_i|n<N\} <m_i+\varepsilon\]
for all $x\in G$ and $N>N'$ and $i=1 \cdots k$.
Then
\[\prod \limits_{i=1}^k \alpha_i^{m_i-\varepsilon}
< \left (\prod \limits_{i=1}^k \alpha_i^{\#\{xa^{-n}\in E_i|n<N\}} \right )^{\frac{1}{N}}
< \prod \limits_{i=1}^k \alpha_i^{m_i+\varepsilon}
.\]
Hence
\[\prod \limits_{i=1}^k \alpha_i^{m_i-\varepsilon}
< \left ( \phi_N \right )^{\frac{1}{N}}
< \prod \limits_{i=1}^k \alpha_i^{m_i+\varepsilon}
,\]
where $\phi_N (x):= \prod \limits_{n=0}^{N-1} \phi (xa^{-n})$, since $\prod \limits_{n=0}^{N-1} \phi (xa^{-n}) = \prod \limits_{i=1}^k \alpha_i^{\#\{xa^{-n}\in E_i|n<N\}}$.

Now we are going to claim two things.
Let $\phi$ be a strictly positive $\mathcal{A}$-step function, then
\begin{enumerate}[{(1)}]
\item
$w\geq\phi$ implies $\int_G \ln\phi\le 0$
\item
$w\le\phi$ implies $\int_G \ln\phi\geq 0$.
\end{enumerate}

Since 
$\int_G \ln\phi = \sum\limits_{i=1}^k m_i\ln \alpha_i = \ln \left ( \prod \limits_{i=1}^k \alpha_i^{m_i}\right )$, so $\int_G \ln\phi\le 0 \Leftrightarrow \prod \limits_{i=1}^k \alpha_i^{m_i}\le 1$ and $\int_G \ln\phi\geq 0 \Leftrightarrow \prod \limits_{i=1}^k \alpha_i^{m_i}\geq 1$.

For (1). Suppose $\prod \limits_{i=1}^k \alpha_i^{m_i}> 1$ but $w\geq\phi$. We can choose $\varepsilon $ small enough such that $1< \left ( \phi_N \right )^{\frac{1}{N}}$. Then $1< \phi_N \le w_N$, where $w_N(x):= \prod \limits_{n=0}^{N-1} w(xa^{-n})$. But $T_{a,w}^N=T_{a^N,w_N}$ is also hypercyclic \cite{Hypercyclic_and_cyclic_vectors}, contradict to the fact that a weighted translation operator with the wieght greater then $1$ will never be a hypercyclic operator.

Simiarly, for (2). Suppose $\prod \limits_{i=1}^k \alpha_i^{m_i}< 1$, but $w\le\phi$. We can choose $\varepsilon $ small enough such that $1> w_N$, contradict to the fact that a weighted translation operator with the wieght smaller then $1$ will never be a hypercyclic operator.

Note that we write $\Phi:=\ln \phi$ and $W:=\ln w$, then $\phi \le w \Leftrightarrow \Phi \le W$ and $\phi \geq w \Leftrightarrow \Phi \geq W$. Moreover, $\Phi$ is still a $\mathcal{A}$-step function and any $\mathcal{A}$-step function is the form $\ln \phi$.

Finally, we will prove $\int_G \ln w = \int_G W = 0$ by the following.
\begin{enumerate}[{(i)}]
\item
If the statement ``$W\geq\Phi$ implies $\int_G \Phi\le 0$'' holds for all $\Phi$, then $\int_G W\le 0$.
\item
If the statement ``$W\le\Phi$ implies $\int_G \Phi\geq 0$'' holds for all $\Phi$, then $\int_G W\geq 0$.
\end{enumerate}
To prove this, it is sufficient to show that the $\mathcal{A}$-step functions are enough to approximate the continuous function $W$ from above (from below) in $\|\cdot\|_{\infty}$ norm. 

Consider the sets $B_{\beta}:=\{x|W(x)=\beta\}$. If there are uncountable many $\beta$ such that $B_{\beta}$ has positive measure. Then write $G$ as the disjoint union $\cup_{\beta\in\mathbb{R}}B_{\beta}$. We get $G$ has infinte measure, which is a contradiction. Hence there are at most countably many $\beta$ such that $B_{\beta}$ has positive measure.

From the consideration of $B_{\beta}$, we see that apart from at most countably many exceptions, the sets $F_{\beta}:= \{x|W(x)\geq \beta\}$ has null measure boundary.

Let $\varepsilon>0$ and $\inf Im W =\beta_0< \beta_1 \cdots < \beta_{\ell-1}<\beta_\ell= \sup Im W$ ($Im W$ means the image of $W$) with $\beta_{i+1}-\beta_i<\varepsilon$ and $E_i:=F_{\beta_i}$ having an null measure boundary for each $0\le i<\ell$.
The existence of such $\beta_i$ follows from the conclusion above that there are at most countably many illegal numbers that $\beta_i$ could choose to be.
Set $\Phi:=\beta_0+\sum\limits_{i=0}^{\ell-1}(\beta_{i+1}-\beta_i)\chi_{E_i}$ as an $\mathcal{A}$-step function.
Then for each $x\in X$, there exists an integer $j$, $0 \le j < n$ with $\beta_j\le W(x)<\beta_{j+1}$ such that
\begin{align*}
\Phi(x)-W(x)
&=\beta_0+\sum\limits_{i=0}^{\ell-1}(\beta_{i+1}-\beta_i)\chi_{E_i}(x)-W(x)\\
&= \beta_0+\sum\limits_{i=0}^{j}(\beta_{i+1}-\beta_i)\chi_{E_i}(x)-W(x)\\
&= \beta_{j+1}-W(x)\\
&\le \beta_{j+1}-\beta_j\le\varepsilon.
\end{align*}
Also, from the equation $\Phi(x)-W(x)= \beta_{j+1}-W(x)$, we see that $\Phi\ge W$.
Hence we have
\begin{align*}
\int_G W&= \int_G (W-\Phi) +\int_G \Phi\\
&\geq -\varepsilon +\int_G \Phi \\
&\geq -\varepsilon.
\end{align*}
So $\int_G W\geq 0$.

Similarly, we can find $\|\Phi-W\|_{\infty}<\varepsilon$ and $W\geq\Phi$. Hence $\int_G W\le 0$.
\end{proof}
\begin{remark}
In fact, the condition ``$T_{a,w}$ is hypercyclic'' in this theorem can be modified to be more weaker one ``$w_n\ngeqslant 1$ and $w_n\nleqslant 1$ for all $n\in\mathbb{N}$'', since this weaker condition is the only thing which is relative to hypercyclicity that we have used in the proof.
\end{remark}

\begin{example}
Let $S^1=[0,1)$ denote the circle group, $a\in S^1$ be an irrational number. Then the condition of Theorem \ref{conditionofwsoncompactgp} holds.
\end{example}

\begin{remark}
The necessary condition $\int_G \ln w = 0$ for a hypercyclic weighted translation operator sometimes still holds for some non-compact group $G$ (ex: when $G$ is a p-adic field).
\end{remark}

\begin{example}
Let $G=\mathbb{Q}_p$ or $\mathbb{Z}_p$ for some prime number $p$ with $a\in G$. Then if $T_{a,w}$ is hypercyclic on $L^{p'}(G)$ for some $p'\in [1,\infty )$, we also have 
\[\int_G \ln w = 0.\]
\end{example}

\begin{proof}
Let $k=v_p(a)$.
Consider these diagrams. For each coset of $p^k\mathbb{Z}_p$ say $b+p^k\mathbb{Z}_p$

\begin{tikzpicture}
\node (A) {$L^{p'}(G)$};
\node (B) [right = 4em of A] {$L^{p'}(G)$};
\node (C) [below = of A] {$L^{p'}(G)$};
\node (D) [right = 4em of C] {$L^{p'}(G)$};
\node (E) [below = of C] {$L^{p'}(p^k\mathbb{Z}_p)$};
\node (F) [right = 2.8em of E] {$L^{p'}(p^k\mathbb{Z}_p)$};
\draw [thick, ->] (A) -- node [anchor = south] {$T_{a,w}$} (B);
\draw [thick, ->] (A) -- node [xshift = 3.3em, anchor = west] {$\circlearrowright$} 
 node [anchor = east] {$T_{-b}$} (C);
\draw [thick, ->] (B) --node [anchor = west] {$T_{-b}$} (D);
\draw [thick, ->] (C) --node [xshift = .3em, yshift = -.2em, anchor = north] {$T_{a,(T_{-b} w)}$} (D);
\draw [thick, ->] (C) -- node [xshift = 3.3em, yshift = -.2em, anchor = west] {$\circlearrowright$}
 node [anchor = east] {$Res$} (E);
\draw [thick, ->] (D) --node [anchor = west] {$Res$} (F);
\draw [thick, ->] (E) --node [xshift = .1em, yshift = -.5em, anchor = north] {$T_{a,\left((T_{-b} w)|_{p^k\mathbb{Z}_p}\right)}$} (F);
\end{tikzpicture},

where $Res$ means the restriction map.

If $T_{a,w}$ is hypercyclic on $L^{p'}(G)$, then so is $T_{a,\left((T_{-b} w)|_{p^k\mathbb{Z}_p}\right)}$ on $L^{p'}(p^k\mathbb{Z}_p)$. And since $p^k\mathbb{Z}_p$ is a compact abelian group with $\overline{<a>}=p^k\mathbb{Z}_p$, by Theorem \ref{conditionofwsoncompactgp}, we have 
\[\int_{p^k\mathbb{Z}_p} \ln (T_{-b} w) = 0.\]
But
\[0 = \int_{p^k\mathbb{Z}_p} \ln (T_{-b} w) =\int_{p^k\mathbb{Z}_p} \ln w(x+b) dx = \int_{b+p^k\mathbb{Z}_p} \ln w(x) dx.\]
In short, if $T_{a,w}$ is hypercyclic on $L^{p'}(G)$, then for each coset of $p^k\mathbb{Z}_p$ say $b+p^k\mathbb{Z}_p$, we have 
\[\int_{b+p^k\mathbb{Z}_p} \ln w = 0.\]
Hence \[\int_G \ln w = 0.\]
\end{proof}

\section{Some properties of hypercyclic weighted translations on p-adic} 
\label{sec:some_properties_of_hypercyclic_weighted_translations_on_p_adic}

\begin{theorem}
\label{conditionofws}
Let $T_{a,w}$ be a weighted translation operator on $L^{p'}(\mathbb{Z}_p)$ for some prime number $p$ and some $p'\in[1,\infty)$. Then for any given $x'\in G$ and $ n\in \mathbb{Z}\setminus\{0\}$.
Define the sets 
\[U_{w,n,x'}:=\{x\in B_{\leq |na|_p}(x')|w_n(x)>1\}\]
and
\[L_{w,n,x'}:=\{x\in B_{\leq |na|_p}(x')|w_n(x)<1\},\]
where $B_{\leq r}(x'):=\{x||x-x'|_p\leq r\}$.
If $T_{a,w}$ is hypercyclic, then
\[U_{w,n,x'}\neq \varnothing\]
and
\[L_{w,n,x'}\neq \varnothing.\]
\end{theorem}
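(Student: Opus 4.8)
The plan is to fix $n\ge 1$ and $x'$, reduce — exactly as in the computation at the end of Section \ref{sec:a_necessary_condition_of_hypercyclic_weighted_translation_operators} — to a hypercyclic weighted translation on the compact group $\overline{\langle na\rangle}$, and then use that the weight of such an operator can be neither everywhere $\ge 1$ nor everywhere $\le 1$. First the bookkeeping: with $k:=v_p(a)$ and $m:=v_p(n)$ we have $|na|_p=p^{-(k+m)}$, so, writing $H:=\overline{\langle na\rangle}$, the identity $\overline{\langle c\rangle}=p^{v_p(c)}\mathbb{Z}_p$ in $\mathbb{Z}_p$ gives $H=p^{k+m}\mathbb{Z}_p=B_{\le |na|_p}(0)$; hence $B_{\le |na|_p}(x')=x'+H$ is precisely one coset of $H$, and $H$ is a nontrivial compact abelian group topologically generated by $na$.

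Since $T_{a,w}$ is hypercyclic on $L^{p'}(\mathbb{Z}_p)$, so is its power $T_{a,w}^{\,n}=T_{na,w_n}$. Conjugating by the invertible translation $T_{-x'}$ (the computation $T_{-b}T_{a,w}T_b=T_{a,T_{-b}w}$ used in Section \ref{sec:a_necessary_condition_of_hypercyclic_weighted_translation_operators}) makes $T_{na,\,T_{-x'}w_n}$ hypercyclic on $L^{p'}(\mathbb{Z}_p)$. Because $na\in H$, translation by $na$ fixes every coset of $H$, so this operator respects the finite decomposition $L^{p'}(\mathbb{Z}_p)=\bigoplus_{c\in\mathbb{Z}_p/H}L^{p'}(c+H)$ summand by summand; projecting onto $L^{p'}(H)$ preserves hypercyclicity, and the resulting operator is the weighted translation $T_{na,\,v}$ on $L^{p'}(H)$, where $v:=(T_{-x'}w_n)\big|_{H}$, i.e. $v(y)=w_n(y+x')$ for $y\in H$ — this is exactly the restriction step of Section \ref{sec:a_necessary_condition_of_hypercyclic_weighted_translation_operators}.

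Now I would upgrade to the pointwise statement. (Theorem \ref{conditionofwsoncompactgp} applied to $H$ would give only $\int_H\ln v=0$, which does not suffice, e.g. $v\equiv 1$ satisfies it.) By translation-invariance of Haar measure on $H$, $\|T_{na,v}^N f\|_{p'}\ge\|f\|_{p'}$ for all $N$ and all $f$ when $v\ge 1$ everywhere, and $\|T_{na,v}^N f\|_{p'}\le\|f\|_{p'}$ when $v\le 1$ everywhere; in either case no orbit can be dense in the infinite-dimensional space $L^{p'}(H)$, so $T_{na,v}$ is not hypercyclic. Since it is hypercyclic, $v$ is neither $\ge 1$ everywhere nor $\le 1$ everywhere, so there are $y_1,y_2\in H$ with $v(y_1)>1$ and $v(y_2)<1$, i.e. $w_n(y_1+x')>1$ and $w_n(y_2+x')<1$ with $y_1+x',\,y_2+x'\in x'+H=B_{\le|na|_p}(x')$; hence $U_{w,n,x'}\ne\varnothing$ and $L_{w,n,x'}\ne\varnothing$. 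For $n\le -1$, the cocycle identity $w_{-n}(x)=w_n(x+na)^{-1}$ and $na\in H$ give $U_{w,n,x'}\ne\varnothing\iff L_{w,-n,x'}\ne\varnothing$ and $L_{w,n,x'}\ne\varnothing\iff U_{w,-n,x'}\ne\varnothing$, so the negative case follows from the positive one.

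The only genuine subtlety I anticipate is the descent of hypercyclicity to the invariant summand $L^{p'}(H)$ and the identification of the restricted operator as $T_{na,v}$ with the stated weight — but this is literally the diagram-chase already carried out in Section \ref{sec:a_necessary_condition_of_hypercyclic_weighted_translation_operators}, applied with $(na,w_n)$ in place of $(a,w)$. The p-adic bookkeeping $B_{\le|na|_p}(x')=x'+\overline{\langle na\rangle}$, the isometry/contraction estimates, and the negative-$n$ reduction are all routine.
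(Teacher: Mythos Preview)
Your proof is correct and follows essentially the same strategy as the paper: conjugate by the translation $T_{-x'}$, pass to the $n$-th power, and quasi-conjugate onto the ball $B_{\le|na|_p}(x')=x'+p^{v_p(na)}\mathbb{Z}_p$ to obtain a hypercyclic weighted translation whose weight cannot be everywhere $\ge 1$ or everywhere $\le 1$. The only cosmetic differences are that the paper rescales this ball back to $\mathbb{Z}_p$ via the multiplication operator $M_p^{\,v_p(na)}$ instead of working directly on the subgroup $H$, and that you handle negative $n$ explicitly via the cocycle identity whereas the paper leaves this implicit.
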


\begin{proof}
Let $T_{a,w}$ be hypercyclic. Consider these two diagrams

\begin{tikzpicture}
\node (A) {$L^{p'}(\mathbb{Z}_p)$};
\node (B) [right = of A] {$L^{p'}(\mathbb{Z}_p)$};
\node (C) [below = of A] {$L^{p'}(\mathbb{Z}_p)$};
\node (D) [right = of C] {$L^{p'}(\mathbb{Z}_p)$};
\draw [thick, ->] (A) -- node [anchor = south] {$T_{a,w}$} (B);
\draw [thick, ->] (A) -- node [xshift = 2.7em, anchor = west] {$\circlearrowright$} 
 node [anchor = east] {$T_{-x'}$} (C);
\draw [thick, ->] (B) --node [anchor = west] {$T_{-x'}$} (D);
\draw [thick, ->] (C) --node [xshift = .3em, yshift = -.2em, anchor = north] {$T_{a,(T_{-x'} w)}$} (D);
\end{tikzpicture},
\begin{tikzpicture}
\node (A) {$L^{p'}(\mathbb{Z}_p)$};
\node (B) [right = of A] {$L^{p'}(\mathbb{Z}_p)$};
\node (C) [below = of A] {$L^{p'}(\mathbb{Z}_p)$};
\node (D) [right = of C] {$L^{p'}(\mathbb{Z}_p)$};
\draw [thick, ->] (A) -- node [anchor = south] {$T_{a,(T_{-x'} w)}^n$} (B);
\draw [thick, ->] (A) -- node [xshift = 2.7em, anchor = west] {$\circlearrowright$} 
 node [anchor = east] {$M_p^{v_p(na)}$} (C);
\draw [thick, ->] (B) --node [anchor = west] {$M_p^{v_p(na)}$} (D);
\draw [thick, ->] (C) --node [anchor = north] {$T$} (D);
\end{tikzpicture},

where the operator $T:=T_{a^{(n)},w^{(n)}}$, $a^{(n)}:=\frac{na}{p^{v_p(na)}}$, $w^{(n)}(x):=w_n(p^{v_p(na)}x+x')$.

Since the diagrams commute, so $T$ must be hypercyclic too. Then its weight function must not be identically smaller than $1$. Hence
\begin{align*} 
\varnothing &\neq\{x|w^{(n)}(x)>1\}\\
&=\{x|w_n(p^{v_p(na)}x+x')>1\}.
\end{align*}
This implies
\begin{align*}
\varnothing &\neq\{x\in B_{\le |na|_p}(0)|w_n(x+x')>1\}+x'\\
&=\{x+x'\in B_{\le |na|_p}(x')|w_n(x+x')>1\}\\
&=\{x\in B_{\le |na|_p}(x')|w_n(x)>1\}\\
&=U_{w,n,x'}.
\end{align*}
Similar to the case of $L_{w,n,x'}$.
\end{proof}

\begin{corollary}
\label{ulczp}
If $w$ is a locally constant weight for the weighted translation operator $T_{a,w}$, then $T_{a,w}$ is not hypercyclic on $L^{p'}(\mathbb{Z}_p)$ for any $p'\in[1,\infty)$.
\end{corollary}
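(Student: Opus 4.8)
The plan is to obtain a contradiction with Theorem \ref{conditionofws}, so I would assume toward a contradiction that $T_{a,w}$ is hypercyclic on $L^{p'}(\mathbb{Z}_p)$. First I would upgrade ``locally constant'' to a uniform statement: because $\mathbb{Z}_p$ is compact, a locally constant $w$ is constant on every coset of $p^k\mathbb{Z}_p$ for some fixed integer $k \geq 0$. (The balls on which $w$ is locally constant form an open cover of $\mathbb{Z}_p$; pass to a finite subcover, and since the cosets of the various $p^j\mathbb{Z}_p$ are totally ordered by inclusion, the largest index occurring works.)

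Next I would note that $w_n$ inherits this property: writing $w_n(x) = \prod_{j=0}^{n-1} w(x - ja)$, if $x \equiv y \pmod{p^k\mathbb{Z}_p}$ then $x - ja \equiv y - ja \pmod{p^k\mathbb{Z}_p}$, hence $w(x-ja) = w(y-ja)$ for every $j$, and so $w_n(x) = w_n(y)$. Thus $w_n$ is constant on every coset of $p^k\mathbb{Z}_p$, for every $n \in \mathbb{Z}\setminus\{0\}$.

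Now I would pick $n$ small enough $p$-adically that the balls appearing in Theorem \ref{conditionofws} collapse into a single such coset. Since $a \in \mathbb{Z}_p$ we have $v_p(a) \geq 0$, so for $n = p^k$ we get $|na|_p = p^{-(k+v_p(a))} \leq p^{-k}$, whence $B_{\leq |na|_p}(x') = x' + p^{k+v_p(a)}\mathbb{Z}_p \subseteq x' + p^k\mathbb{Z}_p$ for every $x' \in \mathbb{Z}_p$; on this set $w_n$ takes a single constant value $c$. (If $a = 0$ the ball is just $\{x'\}$ and the same holds.) Consequently $U_{w,n,x'}$ is the whole ball when $c > 1$ and is empty otherwise, while $L_{w,n,x'}$ is the whole ball when $c < 1$ and is empty otherwise; in no case are $U_{w,n,x'}$ and $L_{w,n,x'}$ both nonempty, contradicting Theorem \ref{conditionofws}. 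Hence $T_{a,w}$ is not hypercyclic. The argument is short once Theorem \ref{conditionofws} is available; the only points needing a little attention are the uniformity of the local modulus $k$, its inheritance by every $w_n$, and the verification that the choice $n = p^k$ really shrinks $B_{\leq |na|_p}(x')$ below the scale $p^{-k}$ — each of which uses only compactness of $\mathbb{Z}_p$ and $a \in \mathbb{Z}_p$.
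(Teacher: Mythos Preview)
Your argument is correct and follows essentially the same route as the paper's own proof: upgrade local constancy to a uniform modulus $k$ by compactness, observe that each $w_n$ is then constant on every coset of $p^k\mathbb{Z}_p$, and take $n=p^k$ so that $B_{\le |na|_p}(x')$ sits inside a single such coset, contradicting Theorem~\ref{conditionofws}. The paper phrases the first two steps via the factorization $w=\overline{w}\circ\pi$ through $\mathbb{Z}_p/p^k\mathbb{Z}_p$ and uses $x'=0$, but the substance is identical.
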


\begin{proof}
Let $w$ be a locally constant function. Since $G$ is compact, 
we see that $w$ is a u.l.c.(uniformly locally constant function). (See \cite[Lemma 3.1.1, p.178]{A_Course_in_p-adic_Analysis}).

Since any ball in $\mathbb{Z}_p$ is a coset of subgroup $p^k\mathbb{Z}_p$ for some $k\in \mathbb{N}$, so $w$ can be viewed as a composition

\begin{tikzpicture}
\node (A) {$\mathbb{Z}_p$};
\node (B) [right = of A] {$\mathbb{Z}_p/p^k\mathbb{Z}_p$};
\node (D) [below = of B] {$(0,\infty)$};
\draw [thick, ->] (A) -- node [anchor = south] {$\pi$} (B);
\draw [thick, ->] (A) -- node [xshift = .5em, yshift = .63em, anchor = west] {$\circlearrowright$} 
 node [yshift = -.2em, anchor = east] {$w$} (D);
\draw [thick, ->] (B) --node [yshift = .3em, anchor = west] {$\overline{w}$} (D);
\end{tikzpicture},

where $\pi$ is the natural quotient map.
Then we call $w$ is a k-u.l.c..

Observe that $T_a w$ is also a k-u.l.c. and the multiplication of two k-u.l.c. are also k-u.l.c.. These imply that $w_n$ are k-u.l.c. for all $n\in \mathbb{N}$.

But by the previous theorem, 
\[U_{w,n,0}=\{x|w_n|_{B_{\le |na|_p}(0)}(x)>1\}\neq \varnothing \]
and
\[L_{w,n,0}=\{x|w_n|_{B_{\le |na|_p}(0)}(x)<1\}\neq \varnothing \]
for any $n\in \mathbb{N}$.

To make the contradiction, we just choose $n= p^k$. Then $w_n|_{B_{\le |na|_p}(0)}$ is a constant function, which cannot satisfy both of them simultaneously.
\end{proof}

\begin{corollary}
\label{ulcqp}
If $w$ is a locally constant weight for the weighted translation operator $T_{a,w}$, then $T_{a,w}$ is not hypercyclic on $L^{p'}(\mathbb{Q}_p)$ for any $p'\in[1,\infty)$.
\end{corollary}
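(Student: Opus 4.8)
The plan is to reduce the statement to Corollary~\ref{ulczp} by the coset-restriction argument already used in the Example following Theorem~\ref{conditionofwsoncompactgp}. First I would dispose of the trivial case $a=0$: then $a$ is a torsion element (of order $1$) of the additive group $\mathbb{Q}_p$, so $T_{0,w}$ is not hypercyclic by \cite[Lemma~1.1.]{Hypercyclic_on_groups}. From now on assume $a\neq 0$ and put $k:=v_p(a)$; then $a\in p^k\mathbb{Z}_p$ and $\overline{<a>}=p^k\mathbb{Z}_p$, which is a compact open subgroup of $\mathbb{Q}_p$ (every closed ball in $\mathbb{Q}_p$ is compact).

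Next I would record the reduction to the compact subgroup $p^k\mathbb{Z}_p$. Identifying $L^{p'}(p^k\mathbb{Z}_p)$ with the closed subspace of $L^{p'}(\mathbb{Q}_p)$ of functions supported on $p^k\mathbb{Z}_p$, the inclusion $a\in p^k\mathbb{Z}_p$ shows that $T_{a,w}$ maps this subspace into itself, so that $T_{a,\,w|_{p^k\mathbb{Z}_p}}$ is a well-defined bounded operator on $L^{p'}(p^k\mathbb{Z}_p)$. Moreover the restriction map $\mathrm{Res}\colon L^{p'}(\mathbb{Q}_p)\to L^{p'}(p^k\mathbb{Z}_p)$ is a norm-decreasing surjection (extend by zero), and a short computation using that $x-a\in p^k\mathbb{Z}_p$ whenever $x\in p^k\mathbb{Z}_p$ gives $\mathrm{Res}\circ T_{a,w}=T_{a,\,w|_{p^k\mathbb{Z}_p}}\circ\mathrm{Res}$. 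Since hypercyclicity passes along a continuous map with dense range that intertwines the two operators (this is the diagram chase used in the Example after Theorem~\ref{conditionofwsoncompactgp}), it follows that if $T_{a,w}$ is hypercyclic on $L^{p'}(\mathbb{Q}_p)$, then $T_{a,\,w|_{p^k\mathbb{Z}_p}}$ is hypercyclic on $L^{p'}(p^k\mathbb{Z}_p)$.

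Then I would transport everything to $\mathbb{Z}_p$ through the topological group isomorphism $\varphi\colon p^k\mathbb{Z}_p\to\mathbb{Z}_p$, $\varphi(x)=p^{-k}x$. Under the induced isomorphism $L^{p'}(p^k\mathbb{Z}_p)\to L^{p'}(\mathbb{Z}_p)$ (isometric up to normalization of Haar measure), the operator $T_{a,\,w|_{p^k\mathbb{Z}_p}}$ is conjugated to $T_{p^{-k}a,\,\wt w}$ with $\wt w(y):=w(p^ky)$. Here $v_p(p^{-k}a)=0$, so $p^{-k}a$ is a nonzero element of $\mathbb{Z}_p$, and $\wt w$ is again a locally constant weight, being the composition of the locally constant $w$ with the homeomorphism $y\mapsto p^ky$. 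Hence $T_{p^{-k}a,\,\wt w}$ would be hypercyclic on $L^{p'}(\mathbb{Z}_p)$, contradicting Corollary~\ref{ulczp}. Therefore $T_{a,w}$ is not hypercyclic on $L^{p'}(\mathbb{Q}_p)$ for any $p'\in[1,\infty)$.

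I do not expect a substantial obstacle here: the whole argument is bookkeeping resting entirely on Corollary~\ref{ulczp}. The only two points that deserve a line of care are the intertwining identity $\mathrm{Res}\circ T_{a,w}=T_{a,\,w|_{p^k\mathbb{Z}_p}}\circ\mathrm{Res}$ (which is exactly where the inclusion $a\in p^k\mathbb{Z}_p$ is used) and the stability of local constancy of the weight under both the restriction to $p^k\mathbb{Z}_p$ and the rescaling by $\varphi$; both are immediate since $p^k\mathbb{Z}_p$ is compact and $\varphi$ is a homeomorphism.
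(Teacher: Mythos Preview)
Your proof is correct and follows essentially the same strategy as the paper: reduce to Corollary~\ref{ulczp} by restricting to a compact subgroup and rescaling, using the standard fact that hypercyclicity passes through an intertwining surjection. The only cosmetic differences are that the paper performs the rescaling first (via $M_a$, landing on translation by $1$) and then restricts to $\mathbb{Z}_p$, whereas you restrict to $p^k\mathbb{Z}_p$ first and then rescale by $p^{-k}$; you also dispose of the case $a=0$ explicitly, which the paper tacitly assumes away.
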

\begin{proof}
Consider the diagrams

\begin{tikzpicture}
\node (A) {$L^{p'}(\mathbb{Q}_p)$};
\node (B) [right = of A] {$L^{p'}(\mathbb{Q}_p)$};
\node (C) [below = of A] {$L^{p'}(\mathbb{Q}_p)$};
\node (D) [right = of C] {$L^{p'}(\mathbb{Q}_p)$};
\node (E) [below = of C] {$L^{p'}(\mathbb{Z}_p)$};
\node (F) [right = of E] {$L^{p'}(\mathbb{Z}_p)$};
\draw [thick, ->] (A) -- node [anchor = south] {$T_{a,w}$} (B);
\draw [thick, ->] (A) -- node [xshift = 2.7em, anchor = west] {$\circlearrowright$} 
 node [anchor = east] {$M_a$} (C);
\draw [thick, ->] (B) --node [anchor = west] {$M_a$} (D);
\draw [thick, ->] (C) --node [xshift = .3em, yshift = -.2em, anchor = north] {$T_{1,(M_a w)}$} (D);
\draw [thick, ->] (C) -- node [xshift = 2.7em, yshift = -.2em, anchor = west] {$\circlearrowright$}
 node [anchor = east] {$Res$} (E);
\draw [thick, ->] (D) --node [anchor = west] {$Res$} (F);
\draw [thick, ->] (E) --node [xshift = .8em, yshift = -.5em, anchor = north] {$T_{1,\left((M_a w)|_{\mathbb{Z}_p}\right)}$} (F);
\end{tikzpicture},

where $Res$ means the restriction map.

So if $T_{a,w}$ is hypercyclic on $L^{p'}(\mathbb{Q}_p)$, then $T_{1,\left((M_a w)|_{\mathbb{Z}_p}\right)}$ is also hypercyclic on $L^{p'}(\mathbb{Z}_p)$. But $(M_a w)|_{\mathbb{Z}_p}$ is also a locally constant function, which contradicts to the Corollary \ref{ulczp}.
\end{proof}

\begin{theorem}
\label{p_adic_main}
If $T_{a,w}$ is hypercyclic on $L^{p'}(G)$ where $G=\mathbb{Q}_p \text{ or } \mathbb{Z}_p$ for some prime $p$ and $p'\in [1,\infty)$, then
$$\int_G w = 0$$
and $w$ must not be a locally constant function.
\end{theorem}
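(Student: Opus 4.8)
The plan is to assemble this theorem entirely from results already established in the excerpt, so that the "proof" is essentially a two-line deduction. Note first that the statement "$\int_G w = 0$" in the display should read "$\int_G \ln w = 0$" (the integrand is $\ln w$, consistent with Theorem \ref{conditionofwsoncompactgp} and with the Example at the end of Section \ref{sec:a_necessary_condition_of_hypercyclic_weighted_translation_operators}); I will state and prove it in that corrected form. For the first conclusion, I would simply invoke the Example immediately following the Remark after Theorem \ref{conditionofwsoncompactgp}: if $T_{a,w}$ is hypercyclic on $L^{p'}(G)$ with $G = \mathbb{Q}_p$ or $\mathbb{Z}_p$, then $\int_G \ln w = 0$. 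That Example's proof reduces the $\mathbb{Q}_p$ case to $\mathbb{Z}_p$ via the multiplication operator $M_a$ and then slices $\mathbb{Z}_p$ (or $p^k\mathbb{Z}_p$) into cosets of $p^k\mathbb{Z}_p$ where $k = v_p(a)$, applying Theorem \ref{conditionofwsoncompactgp} on each compact piece; I would just cite it.

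For the second conclusion, that $w$ cannot be locally constant, I would cite Corollary \ref{ulczp} (for $G = \mathbb{Z}_p$) and Corollary \ref{ulcqp} (for $G = \mathbb{Q}_p$), which together say precisely that a locally constant weight never yields a hypercyclic $T_{a,w}$ on $L^{p'}(G)$ for $G \in \{\mathbb{Z}_p, \mathbb{Q}_p\}$. Contrapositively, hypercyclicity of $T_{a,w}$ forces $w$ to be non–locally-constant. So the body of the proof is: split on whether $G = \mathbb{Z}_p$ or $G = \mathbb{Q}_p$ (both conclusions are already packaged that way in the cited results), and conclude.

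Since every ingredient is available, there is no real obstacle — the only thing requiring care is bookkeeping: making sure the hypotheses of each cited result match (in particular that "hypercyclic on $L^{p'}(G)$" is exactly the hypothesis of the Example and of Corollaries \ref{ulczp} and \ref{ulcqp}, which it is), and flagging the $\int_G w$ versus $\int_G \ln w$ discrepancy so the statement is internally consistent. If one instead wanted a self-contained argument for the non–local-constancy part without quoting the corollaries, the work would be to reprove Corollary \ref{ulczp}: use Theorem \ref{conditionofws} to get both $U_{w,n,0} \neq \varnothing$ and $L_{w,n,0} \neq \varnothing$, then observe that a locally constant $w$ on the compact group $\mathbb{Z}_p$ is uniformly locally constant, say $k$-u.l.c., so that $w_n$ is $k$-u.l.c. for every $n$; choosing $n = p^k$ makes $w_n$ constant on the ball $B_{\le |na|_p}(0)$, which cannot simultaneously contain a point where $w_n > 1$ and a point where $w_n < 1$ — contradiction. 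For $\mathbb{Q}_p$ one transfers this to $\mathbb{Z}_p$ by $M_a$ and restriction, exactly as in Corollary \ref{ulcqp}. But given the structure of the paper, quoting the corollaries is cleaner, and that is the route I would take.

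\begin{proof}
The displayed identity is $\int_G \ln w = 0$; this is exactly the content of the Example following the Remark after Theorem \ref{conditionofwsoncompactgp} (proved there by reducing the $\mathbb{Q}_p$ case to $\mathbb{Z}_p$ via $M_a$ and slicing into cosets of $p^k\mathbb{Z}_p$, $k = v_p(a)$, on each of which Theorem \ref{conditionofwsoncompactgp} applies). Next, if $w$ were locally constant, then $T_{a,w}$ would fail to be hypercyclic on $L^{p'}(\mathbb{Z}_p)$ by Corollary \ref{ulczp}, and on $L^{p'}(\mathbb{Q}_p)$ by Corollary \ref{ulcqp}; since by hypothesis $T_{a,w}$ is hypercyclic on $L^{p'}(G)$, $w$ cannot be locally constant. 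This proves both assertions.
\end{proof}
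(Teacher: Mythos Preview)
Your proposal is correct and matches the paper exactly: Theorem \ref{p_adic_main} is stated there without proof, serving purely as a summary of the Example following Theorem \ref{conditionofwsoncompactgp} together with Corollaries \ref{ulczp} and \ref{ulcqp}, which is precisely how you assemble it (and your correction of $\int_G w$ to $\int_G \ln w$ agrees with the version of the theorem announced in the Introduction). One minor inaccuracy in your parenthetical: the Example does not reduce $\mathbb{Q}_p$ to $\mathbb{Z}_p$ via $M_a$ but rather via $T_{-b}$ and restriction to each coset $b + p^k\mathbb{Z}_p$; the $M_a$ trick appears only in Corollary \ref{ulcqp}.
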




\bibliographystyle{abbrv}
\bibliography{Weighted_translation_operators_on_locally_compact_group} 
\begin{filecontents}{Weighted_translation_operators_on_locally_compact_group.bib}

@article{Hypercyclic_on_groups,
      author="C-C. Chen and C-H. Chu",
      title="Hypercyclic weighted translations on groups",
      journal="Proceedings of the American Mathematical Society",
      volume="139",
      year="2011",
      pages="2839--2846"
},
@article{non-torsion,
      author="Chung-Chuan Chen",
      title="Hypercyclic weighted translations generated by non-torsion elements",
      journal="Archiv der Mathematik",
      volume="101",
      year="2013",
      pages="135--141"
},
@article{Birkhoff,
      author="Garrett Birkhoff",
      title="A note on topological groups",
      journal="Compositio Mathematica",
      year="1936",
      pages="427--430"
},
@article{Chaotic_on_groups,
      author="Chung-Chuan Chen",
      title="Chaotic weighted translations on groups",
      journal="Archiv der Mathematik",
      volume="97",
      year="2011",
      pages="61--68"
},
@article{plig,
      author="Raimond A. Struble",
      title="Metrics in locally compact groups",
      journal="Compositio Mathematica",
      year="2006",
      volume="28",
      year="1974",
      pages="217--222"
},
@article{E_Hewitt,
      author="E. Hewitt and K.A. Ross",
      title="Abstract harmonic analysis",
      journal="Springer-Verlag, Heidelberg",
      year="1979",
},
@article{Linear_chaos,
      author="K.-G. Grosse-Erdmann and A. Peris",
      title="Linear chaos",
      journal="Springer, Universitext",
      year="2011",
},
@article{semigroups,
      author="Wolfgang Desch, Wilhelm Schappacher and Glenn F. Webb",
      title="Hypercyclic and chaotic semigroups of linear operators",
      journal="Cambridge University Press",
      volume="17",
      year="1997",
      pages="793--819"
      
},
@article{A_Course_in_Abstract_Harmonic_Analysis,
      author="Gerald B. Folland",
      title="A Course in Abstract Harmonic Analysis",
      journal="CRC Press",
      year="1995",
},
@article{Uniform_Distribution_of_Sequences,
      author="L. Kuipers and H. Niederreiter",
      title="Uniform Distribution of Sequences",
      journal="A Wiley-interscience publication",
      year="1974",
},
@article{A_Course_in_p-adic_Analysis,
      author="Robert, Alain M.",
      title="A Course in p-adic Analysis",
      journal="Graduate Texts in Mathematics",
      volume="198",
      year="2000",
},
@article{On_aperiodicity_and_hypercyclic_weighted,
      author="Kui-Yo Chen",
      title="On aperiodicity and hypercyclic weighted translation operators",
      journal="Journal of Mathematical Analysis and Applications",
      volume="462",
      year="2018",
      pages="1669--1678",
},
@article{Hypercyclic_and_cyclic_vectors,
      author="S. I. Ansari",
      title="Hypercyclic and cyclic vectors",
      journal="Journal of Functional Analysis",
      volume="128",
      year="1995",
      pages="374--383"
}
\end{filecontents}

\vspace{.1in}
\end{document}